\DeclareMathOperator*{\argmin}{arg\,min}
\DeclareMathOperator*{\argmax}{arg\,max}
\DeclareMathOperator{\supp}{supp}
\DeclarePairedDelimiterX{\inner}[2]{\langle}{\rangle}{{#1},{#2}}
\newcommand{\R}{\mathbb{R}}
\newcommand{\N}{\mathbb{N}}
\newcommand{\eps}{\varepsilon}
\newcommand*{\bigcdot}{}%
\DeclareRobustCommand*{\bigcdot}{%
	\mathbin{\mathpalette\bigcdot@{}}%
}
\newcommand*{\bigcdot@scalefactor}{.5}
\newcommand*{\bigcdot@widthfactor}{1.15}
\newcommand*{\bigcdot@}[2]{%
	\sbox0{$#1\vcenter{}$}%
	\sbox2{$#1\cdot\m@th$}%
	\hbox to \bigcdot@widthfactor\wd2{%
		\hfil
		\raise\ht0\hbox{%
			\scalebox{\bigcdot@scalefactor}{%
				\lower\ht0\hbox{$#1\bullet\m@th$}%
			}%
		}%
		\hfil
	}%
}
\newcommand{\restr}[2]{{\left.\kern-\nulldelimiterspace #1 \vphantom{\big|} \right|_{#2}}}
\newcommand*{\transp}{%
	{\mathpalette\@transpose{}}%
}
\newcommand*{\@transpose}[2]{%
	\raisebox{\depth}{$\m@th#1\intercal$}%
}
\definecolor{darkred}{HTML}{880808}
\theoremstyle{plain}
\newtheorem{sectioncount}{xxxxxxx}[section]
\newtheorem{theorem}[sectioncount]{Theorem}
\newtheorem{question}[sectioncount]{Question}
\newtheorem{lemma}[sectioncount]{Lemma}
\newtheorem{corollary}[sectioncount]{Corollary}
\theoremstyle{definition}
\newtheorem{remark}[sectioncount]{Remark}
\newtheorem{assumptionXXX}{Assumption}
\newtheorem{propertyXXX}{Property}
\begin{document}

\title{On the breakdown point of transport-based quantiles}
\author{Marco Avella-Medina\thanks{Department of Statistics, Columbia University. Email: marco.avella@columbia.edu} \and Alberto González-Sanz\thanks{Department of Statistics, Columbia University. Email: alberto.gonzalezsanz@columbia.edu}}
\date{}
\maketitle

\begin{abstract}
Recent work has used optimal transport ideas to generalize the notion of (center-outward) quantiles to dimension $d\geq 2$. We study the robustness properties of these transport-based quantiles by deriving their breakdown point, roughly, the smallest amount of contamination required  to make these quantiles take arbitrarily aberrant values. We prove  that the  transport median defined in Chernozhukov et al.~(2017) and Hallin et al.~(2021)
has breakdown point of $1/2$. Moreover,  a point in the  transport depth contour of order $\tau\in [0,1/2]$ has breakdown point of $\tau$. This shows  that  the multivariate transport depth shares the same breakdown properties as its univariate counterpart. Our proof relies on a general argument connecting the breakdown point of transport maps evaluated at a point to the  Tukey depth of that point in the reference measure. 
\end{abstract}

\noindent \textit{Keywords}: Center-outward quantiles,  contamination model, multivariate medians,  optimal transport, robustness, Tukey depth.

\noindent \textit{MSC2020 subject classifications. Primary: 62G35, 62G30}.

\section{Introduction}
 
Recently, \cite{ChernozhukovEtAl.17.AOS,HallinDelBarrioCuestaAlbertosMatran.21} proposed a new notion of multivariate quantiles based on mass transportation.  They naturally lead to multivariate definitions of ranks and signs  that are distribution free and can be used to construct  statistically efficient nonparametric tests \cite{HallinDelBarrioCuestaAlbertosMatran.21, DebSen.19, Shietal.2022,deb:bhattacharya:sen2025}.  The idea behind transport quantiles is to preserve two fundamental properties of the quantile function \( Q_X \) of a univariate random variable \( X\sim P \):  it pushes the uniform (Lebesgue) measure on $[0,1]$ forward to $P$; and it is monotone, i.e., it is the derivative of a convex function. The transport map  provides a  natural multivariate analogue construction. Given a random vector \( X \sim P \) and a {\it reference  measure} \( \mu \), the transport-based quantile function is the unique (a.s.\ defined) gradient of a convex function \( \mathbf{Q}_{\pm} \) pushing $\mu$ forward to $P$. The existence and uniqueness of $\mathbf{Q}_{\pm}$ is guaranteed by McCann's Theorem \cite{McCann.95}.  

The univariate empirical median is one the most fundamental examples of a robust statistic. It estimates consistently the population median and it is highly robust in the presence of outliers. In particular, the minimum fraction of  {outlying} sample values required for the median to take arbitrarily large values is $1/2$. In other words, its breakdown point is $1/2$. In this paper we show that the breakdown point of the multivariate transport-based median is also $1/2$. More generally, we show that the transport-based depth contour of order $\tau\in [0,1/2]$ has breakdown point $\tau$ for natural choices of reference measures. This general result is analogous to the  breakdown point of the empirical univariate quantile of order \(q=k/n \in (0,1)\) which is exactly 
\( \min(k,n+1-k)/n\).  Note  that the breakdown points that we find are dimension independent and they are attained by computationally efficient estimators, unlike many classical high breakdown methods in the robust statistics literature \cite{maronna1976, davies1987,donoho:gasko1992,tyler1994,zuo2003}.  { Anecdotally, this high breakdown point result for the transport-based median came as a surprise to us, as our initial intuition was that the breakdown point was low and  we were trying to prove it. }

The proof of the breakdown point of the transport-based quantile contours relies on an unexpected relation between the optimal transport problem and the Tukey depth.  We show that the breakdown point of transport maps evaluated at a point of the support of the reference measure is equal to the Tukey depth of that point in the reference measure. Therefore in order to maximize the breakdown point of transport quantiles, one has to choose reference measures $\mu$ that attain the maximal Tukey depth of $1/2$. This is the case of natural choices of $\mu$; c.f. \autoref{rem:ref_measure} and \autoref{rem:reference_measure2}.

{Our breakdown point results cover both the finite sample breakdown point of transport-based quantiles as well as the asymptotic breakdown point \cite{donoho:huber1983,hampel1971}. These quantiles correspond respectively to a discrete and continuous transport map. After completing the first version of this manuscript, the parallel and independent work of \cite{paindaveine:passeggeri2024} was posted on the arXiv. While the core results are very similar, they are complementary as the authors only considered the semi-discrete transport motivated by the transport-based quantiles  defined in \cite{GhosalSen.19}  {whereas} we focused on the empirical quantiles defined in \cite{ChernozhukovEtAl.17.AOS, HallinDelBarrioCuestaAlbertosMatran.21} as well their natural population counterparts. Our proof is based on a geometric argument while the proofs in \cite{paindaveine:passeggeri2024} are more analytical. Our arguments can also be adapted to provide an alternative proof of the breakdown point of the semi-discrete transport problem presented in \cite{paindaveine:passeggeri2024}.  }  {We further note that \cite{paindaveine:passeggeri2024} also suggest a multivariate trimmed means construction based semi-discrete optimal transport. In particular, the authors propose to trim the reference measure according to its Tukey depth. This idea can also be applied to the discrete-discrete case using the results of our paper.}

\subsection{Breakdown point of the optimal transport map}

It turns out that studying the finite sample breakdown point of empirical transport-based quantiles, is equivalent to answering the following question, which itself is of independent interest:

\begin{question}\label{question}
   {\it  Given a  sample $X_1, \dots, X_n$, what  is the minimum fraction of bad sample values required to make the optimal transport map from   $u_1, \dots, u_n$ to $X_1, \dots, X_n$ evaluated at a particular  $u_i$  arbitrarily aberrant?}
\end{question}

%
Before delving into details, let us recall the definition of the optimal transport problem for discrete measures. We refer to \cite{Villani.03} for an in-depth exposition on the topic.  Let $X^{(n)}=\{X_1, \dots, X_n\}$ and 
$u^{(n)}=\{u_1, \dots, u_n\}$ be two finite sequence points in $\R^d $. We call $X^{(n)}$ and $u^{(n)}$   {\it target} and  {\it reference} samples.

We denote by $\Gamma(u^{(n)},X^{(n)} )$ the set of all of bijective mappings  $T: u^{(n)}\to X^{(n)}$. An {\it empirical optimal transport map} $\hat{T} =\hat{T}_{X^{(n)}}$ is any solution of the (quadratic) optimal transport problem  
\begin{equation}
    \label{DiscreteOT}
    \min_{T\in \Gamma(u^{(n)},X^{(n)} )}\frac{1}{n}\sum_{i=1}^n  \|T(u_i)-u_i\|^2. 
\end{equation}
Answering Question~\ref{question} is equivalent to determining the  {\it finite sample breakdown point} of the estimator  $\hat{T}(u_i)$, i.e.,  
 $${\rm BP}(\hat{T}(u_i))=\frac{1}{n} \min \left\{ \ell \in \{1, \dots, n\}: \ \sup_{Z^{(n)}\in \mathcal{Q}_{\ell,n}}  \|\hat{T}(u_i)-\hat{T}_{Z^{(n)}}(u_i)\|  = \infty\right \},
$$
where $\mathcal{Q}_{\ell,n}$ is the set of all sets $Z^{(n)}=\{Z_1, \dots Z_n\}\subset  \R^d $,  sharing  {at least} $n-\ell$ elements with $X^{(n)}$.  { Since the transport problem \eqref{DiscreteOT} minimizes a square loss, one could expect the resulting map to not be robust. Indeed, the square loss is well-known to lead to non-robust estimators in various settings such as location estimation, regression and PCA \cite{hampeletal1986,huber:ronchetti2009,maronnaetal2019}. In particular, one could expect the finite sample breakdown point to be $1/n$.  We will see that this is not at all the case {.} }

The breakdown point is one of the simplest and most popular tools for quantifying  robustness in statistics. The finite sample definition given above was introduced by \cite{donoho:huber1983} in the context of univariate location estimators. It was quickly extended to various multivariate settings in the robust statistics literature e.g. \cite{rousseeuw:yohai1984,rousseeuw1984,davies1987,yohai1987}. The notion of breakdown point was originally introduced using an asymptotic formulation in \cite{hampel1968,hampel1971}. We will also introduce a definition of breakdown of transport maps for population measures akin to the asymptotic formulation of the breakdown point in Section 2.

The median and more generally quantiles are fundamental statistics. However, the definitions of  median and quantiles, based on the canonical order of the real line, do not naturally extend to multivariate data. In higher dimensions, statistical depth theory offers an alternative. The literature provides various notions of depth, such as Tukey depth, spatial depth, integrated depth, or lens depth (see \cite{Mosler.2013} and references therein). The most prominent example is undoubtedly the Tukey (or halfspace) depth \cite{Tukey.1975}.  Recall that the Tukey  depth of a point $x\in \R^d$  with respect to  the set $X^{(n)}$ is defined as 
\[
\mathrm{TD}(x; X^{(n)}) = \min_{v\in \mathcal{S}^{d-1}} \left\{ \frac{1}{n} \sum_{i=1}^{n} \mathbf{1}_{\left( \langle v, X_i - x \rangle \geq 0 \right)} \right\},
\]
where $\mathcal{S}^{d-1}=\{v {\in \R^d}: \|v\| = 1\}$ denotes the unit sphere and  $\mathbf{1}_A$ is the  indicator function of the event $A$. 
It takes values in the interval \([0,1/2]\), where the observation with maximum depth  is called the Tukey median and is considered the most central, while observations with depth close to $0$ are regarded as more extreme. The breakdown point of the Tukey median is \(1/3\) for halfspace symmetric distributions\footnote{ {A distribution $\mu$ is halfspace symmetric about $\theta$ if $\mu(H)\ge 1/2$ for every closed halfspace $H$ containing $\theta$   (see \cite[p.~464]{zuo:serfling2000}). Note also that the finite sample breakdown point in \cite{donoho:gasko1992} is defined by adding outlying points instead of replacing observed data points with outlying ones. More precisely, they compute the number  $m^*$ of points that  need to be added to a sample of size $n$  in order to break the Tukey median, so their breakdown point is then $m^*/(m^*+n)$.}  } and is at least $1/(d+1)$ for general configurations of points \cite{donoho:gasko1992}. 

In this work we answer the motivating Question \ref{question} by finding an unexpected relation between the breakdown point of the optimal transport map and the Tukey depth. The following main result assumes the reference sequence of points is in \emph{general position}, i.e., no more than $d$ points lie in any $(d-1)$-dimensional affine subspace. 
\begin{theorem}    \label{Theo:mainDiscrete}
    Let $\hat{T}$ be a solution to \eqref{DiscreteOT} and  assume $u^{(n)}$ is in general position. Then, 
    $${\rm BP}(\hat{T}(u_i))\in \left[\mathrm{TD}(u_i; u^{(n)})-\frac{d-1}{n},\mathrm{TD}(u_i;u^{(n)})\right].
$$
\end{theorem}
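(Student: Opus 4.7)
The plan is to prove the two inequalities separately, using the monotonicity of the optimal matching (i.e., $2$-cyclic monotonicity of the transport plan for the quadratic cost) as the main tool.

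For the upper bound, $\mathrm{BP}(\hat{T}(u_i)) \leq \mathrm{TD}(u_i; u^{(n)})$, let $k = n\,\mathrm{TD}(u_i; u^{(n)})$ and pick $v \in \mathcal{S}^{d-1}$ attaining the minimum in the Tukey depth, so that $S := \{j : \langle v, u_j - u_i\rangle \geq 0\}$ has exactly $k$ elements (including $i$). For each $m > 0$, replace some $k$ of the $X_j$'s by points of the form $u_i + m v + \varepsilon_1, \ldots, u_i + m v + \varepsilon_k$ with small generic perturbations. I would then show that for every optimal matching and for sufficiently large $m$, $S$ must be sent exactly to the contaminated targets: if there were $j_0 \in S$ with $\hat T(u_{j_0}) = X^0$ uncontaminated and $j_1 \notin S$ with $\hat T(u_{j_1}) = Z^1$ contaminated, the corresponding two-cycle swap would change the quadratic cost by $2\langle Z^1 - X^0, u_{j_1} - u_{j_0}\rangle \sim 2m\,\langle v, u_{j_1} - u_{j_0}\rangle < 0$, contradicting optimality. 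Since $i \in S$, this yields $\|\hat T(u_i)\|\to\infty$ as $m\to\infty$.

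For the lower bound, $\mathrm{BP}(\hat{T}(u_i)) \geq \mathrm{TD}(u_i; u^{(n)}) - (d-1)/n$, assume by contradiction that some integer $\ell \leq n\,\mathrm{TD}(u_i; u^{(n)}) - d$ admits a sequence $Z^{(n,k)} \in \mathcal{Q}_{\ell,n}$ with $Z_\star^{(k)} := \hat{T}_{Z^{(n,k)}}(u_i) \to \infty$. Passing to a subsequence, $Z_\star^{(k)}/\|Z_\star^{(k)}\| \to v$ for some $v \in \mathcal{S}^{d-1}$. For every $j$ with $\langle v, u_j - u_i\rangle > 0$, the $2$-cyclic monotonicity applied to the pair $(u_i, u_j)$ gives $\langle \hat T_{Z^{(n,k)}}(u_j), u_j - u_i\rangle \geq \langle Z_\star^{(k)}, u_j - u_i\rangle \to +\infty$, whence $\|\hat T_{Z^{(n,k)}}(u_j)\| \to \infty$. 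Since $X^{(n)}$ is bounded and $\hat T_{Z^{(n,k)}}$ is a bijection onto the target set, for $k$ large these images, together with $Z_\star^{(k)}$, are distinct contaminated targets, producing the bound $\ell \geq |A_+(v)| + 1$, where $A_+(v) := \{j : \langle v, u_j - u_i\rangle > 0\}$ (note $i \notin A_+(v)$). The definition of Tukey depth gives $|\{j : \langle v, u_j - u_i\rangle \geq 0\}| \geq n\,\mathrm{TD}(u_i; u^{(n)})$, while the boundary set $\{j : \langle v, u_j - u_i\rangle = 0\}$ is the intersection of $u^{(n)}$ with the hyperplane through $u_i$ orthogonal to $v$, hence contains at most $d$ points by general position. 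Therefore $|A_+(v)| \geq n\,\mathrm{TD}(u_i; u^{(n)}) - d$, yielding $\ell \geq n\,\mathrm{TD}(u_i; u^{(n)}) - (d-1)$, which contradicts the standing assumption.

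The main obstacle is the lower bound: one must first distil a well-defined \emph{breakdown direction} $v$ out of an arbitrary contamination sequence, and then, from a single pairwise monotonicity inequality, translate the divergence of the single image $\hat T(u_i)$ into divergence of the images of an entire open halfspace of reference points. The general position hypothesis enters solely to bound the degenerate contribution of points lying exactly on the hyperplane orthogonal to $v$ through $u_i$, which accounts precisely for the $(d-1)/n$ gap in the stated interval.
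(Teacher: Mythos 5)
Your proof is correct, and it takes a genuinely different, more combinatorial route than the paper's. The paper establishes both bounds by contradiction through a geometric cone lemma (Lemma~\ref{lemma:conetocone}): it shows that any cyclically monotone map sends the convex cone $\mathcal{C}_{x,e,\theta}$ into the non-convex cone $\mathcal{A}_{u,e,\theta}$, constructs a cone from the (assumed bounded, resp.\ diverging) image point $\hat T(u_i)$ that captures the contamination (resp.\ the bounded data), and derives a contradiction by comparing the $\nu_{\ell,k}$-mass of $\mathcal{C}$ with the $\mu_n$-mass of $\mathcal{A}$ via \autoref{lemma:ConvergenceToOneHalph}. You instead unpack the underlying two-point cyclic monotonicity $\langle u_i-u_j,\hat T(u_i)-\hat T(u_j)\rangle\ge 0$ directly: for the upper bound you argue constructively via a two-cycle cost swap, showing that the $|S|=n\,\mathrm{TD}(u_i;u^{(n)})$ references in the worst halfspace must receive exactly the far-away contaminated targets (which in particular forces $\hat T(u_i)$ to be contaminated), while for the lower bound you extract a breakdown direction $v$ and show every reference in the open halfspace $A_+(v)$ is dragged to infinity as well, yielding a direct pigeonhole count against the number $\ell$ of contaminated targets. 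The payoff of your route is that it is tailored to, and fully transparent in, the discrete setting; it also pins the contamination level to $\ell=n\,\mathrm{TD}(u_i;u^{(n)})$ rather than needing $\ell/n>\mathrm{TD}$ strictly, so it reads off the stated upper endpoint exactly. The cost is that it does not lift to the continuous case: the paper's cone lemma is exactly what allows \autoref{lemma:UpperDisc}--\autoref{lemma:LowerDisc} and \autoref{lemma:LowerCont}--\autoref{lemma:LowerCont2} to be proved by one unified mass-counting template, which is why the paper chooses that abstraction. One small point worth making explicit in your write-up: for the lower bound you need the sup in the definition of ${\rm BP}$ to become infinite along a sequence $Z^{(n,k)}$, and the monotonicity inequality you invoke must hold for \emph{every} optimal matching $\hat T_{Z^{(n,k)}}$, since the empirical OT map need not be unique; this is fine because \eqref{eq:monotoneDiscreteProof} holds for any optimizer, but it deserves a sentence.
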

The upper bounds stated in \autoref{Theo:mainDiscrete} are sharp as discussed in \autoref{Rem:sharpness}. 
In \autoref{Sec:MainResultsCont}, we provide a slightly more general version of  {\autoref{Theo:mainDiscrete}} as well as an analogous statement for the transport map of continuous measures.  The latter can be seen as Hampel's asymptotic breakdown point \cite{hampel1971}.

\subsection{Breakdown points of transport-based medians and contours.}
The {\it empirical transport-based quantile function} $\mathbf{Q}_{ \pm }^{(n)}$ of a sample $X^{(n)}$ of $P$ is any mapping solving \eqref{DiscreteOT}, while $\mathbf{F}_{ \pm }^{(n)}=(\mathbf{Q}_{ \pm }^{(n)})^{-1}:X^{(n)}\to u^{(n)}$ is the {\it empirical transport-based distribution function.} 
Note that these definitions depend on the choice of $u^{(n)}$. In practice, one could take a random sample from a population reference measure $\mu$. Deterministic choices include taking a    {\it ``spherical regular grid''} \cite{HallinDelBarrioCuestaAlbertosMatran.21}  or  a Halton sequence in a hypercube   \cite{DebSen.19}.

The transport-based depth $D_\pm(X_i)$ of an observation $X_i$ was defined in \cite{ChernozhukovEtAl.17.AOS} as the Tukey depth of $\mathbf{F}_{ \pm }^{(n)} (X_i)$ with respect to  $ u^{(n)}$, i.e.,     $$ {\rm D}_\pm^{(n)}(X_i)={\rm TD}( \mathbf{F}_{ \pm }^{(n)} (X_i); u^{(n)}). $$ 
So far, the depth function is defined on $X^{(n)}$. In order to extend its definition to the whole $\R^d$, 
 \cite{HallinDelBarrioCuestaAlbertosMatran.21}   proposed to extend 
 $\mathbf{F}_{ \pm }^{(n)}$ by considering  a  continuous cyclically monotone interpolation, i.e.,  by choosing  a convex and differentiable function $\psi_n=\psi_{n}(\cdot; X^{(n)}):\R^d\to \R$ such that $\nabla\psi_n(X_i)=\mathbf{F}_{ \pm }^{(n)}(X_i)$ for all $i=1, \dots, n$. Equipped with this extension one can define the {\it empirical transport-based depth function} of a point $x\in \R^d$ as 
 $$ {\rm D}_\pm^{(n)}(x)={\rm D}_\pm^{(n)}(x;X^{(n)}):={\rm TD}( \nabla\psi_n (x); u^{(n)}) $$ 
 as well as the empirical  {\it empirical transport-based depth contours} 
$$ {\cal  C}_\pm^{(n)}(\tau)=\mathcal{C}_\pm^{(n)}(\tau; X^{(n)}):=\{y\in \R^d: \ {\rm D}_\pm^{(n)}(y)=\tau \}, \quad \tau\in [0,1/2]. $$
An {\it empirical  transport-based median} $m_\pm^{(n)}$ of  $X^{(n)}$ is a deepest point   in the sense 
$$ m_\pm^{(n)}\in \argmax_{x\in \R^d}  {\rm D}_\pm^{(n)}(x) .  $$

Our main result  \autoref{Theo:mainDiscrete} provides asymptotically sharp  estimates of the 
finite sample breakdown point of $ {\cal  C}_\pm^{(n)}$, which is defined as
 $$
 {\rm BP}({\cal  C}_\pm^{(n)}(\tau) ):=\frac{1}{n} \min \left\{ k \in \{1, \dots, n\}: \ \sup_{Z^{(n)}\in \mathcal{Q}_{k,n}} d_H(\mathcal{C}_\pm^{(n)}(\tau; X^{(n)}), \mathcal{C}_\pm^{(n)}(\tau; Z^{(n)}))= \infty\right \}.
$$
Here $d_H$ denotes the 
{\it Hausdorff distance}, which   is defined as:
\[
d_H(A, B) = \max\left\{ \sup_{a \in A} \inf_{b \in B} \|a - b\|, \sup_{b \in B} \inf_{a \in A} \|b - a\| \right\}, \quad A, B \subset \mathbb{R}^d.
\]
Note that, as depth contours are compact sets\footnote{This follows by the vanishing at infinity property and lower semi-continuity \cite[ {Theorem~2.1}]{zuo:serfling2000}.}, the Hausdorff distance is a natural metric for measuring distances between depth contours. See for example  \cite{donoho:gasko1992,Brunel.2018} and \cite{delBarrioGonzalezHallin.2020}  in the context of Tukey and transport-based  depth contours respectively.
\begin{corollary}\label{coro:CenterOutward}
Let $X_1, \dots, X_n$ be a sequence of vectors in $\R^d$ with all of the elements being different and let $u^{(n)}$ be in general position. Set 
$
\tau^*:=\max_{u\in \R^d} {\rm TD}( u; u^{(n)}). $
Then the following holds irrespective of the dataset $X_1, \dots, X_n$,   
 \begin{enumerate}
     \item (Breakdown point of the median)  $$ {\rm BP}(m_\pm^{(n)})\in \left[\tau^*- {d}/n,\tau^*\right].  $$ 
     \item (Breakdown point of the depth  contours) For every $\tau\in [0,\tau^*)$ with  ${\cal  C}_\pm^{(n)}(\tau)\neq \emptyset$, 
 $$ {\rm BP}({\cal  C}_\pm^{(n)}(\tau) )\in [\tau- {d}/n,\tau]. $$ 
 \end{enumerate}
 \end{corollary}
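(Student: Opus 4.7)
The plan is to reduce both statements to applications of \autoref{Theo:mainDiscrete}, by exploiting the fact that transport-based medians and depth contours are, by construction, pullbacks through $\mathbf{Q}_\pm^{(n)}$ of their Tukey counterparts in the \emph{fixed} reference sample $u^{(n)}$. Since contamination acts only on $X^{(n)}$, the Tukey depth function on $u^{(n)}$ is unaffected, so each breakdown question reduces to the behaviour of $\hat{T}_{Z^{(n)}}(u_i)$ at reference points $u_i$ of the appropriate Tukey depth.

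For the median, I fix a deepest reference point $u^{*} \in u^{(n)}$ with $\mathrm{TD}(u^{*}; u^{(n)}) = \tau^{*}$ and observe that $\mathbf{Q}_\pm^{(n)}(u^{*})$ lies in $m_\pm^{(n)}$, because $\nabla\psi_n(\mathbf{Q}_\pm^{(n)}(u^{*})) = \mathbf{F}_\pm^{(n)}(\mathbf{Q}_\pm^{(n)}(u^{*})) = u^{*}$ attains the maximum empirical Tukey depth. The same inclusion persists after contamination: $\hat{T}_{Z^{(n)}}(u^{*})$ belongs to the median computed from $Z^{(n)}$. Applying \autoref{Theo:mainDiscrete} with $u_i = u^{*}$ then gives the upper bound $\mathrm{BP}(m_\pm^{(n)}) \leq \tau^{*}$ (a contamination of size $\tau^{*} n$ sends $\hat{T}_{Z^{(n)}}(u^{*})$ to infinity, hence a point of the contaminated median escapes) and the lower bound $\mathrm{BP}(m_\pm^{(n)}) \geq \tau^{*} - (d-1)/n$ (when $\ell < \tau^{*} n - (d-1)$, $\hat{T}_{Z^{(n)}}(u^{*})$ stays bounded, and so does the rest of the median set, by the argument below).

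For the depth contour, I set $I_\tau := \{i : \mathrm{TD}(u_i; u^{(n)}) = \tau\}$ and note the inclusion $\{\mathbf{Q}_\pm^{(n)}(u_i) : i \in I_\tau\} \subset \mathcal{C}_\pm^{(n)}(\tau; X^{(n)})$, which persists under contamination as $\{\hat{T}_{Z^{(n)}}(u_i) : i \in I_\tau\} \subset \mathcal{C}_\pm^{(n)}(\tau; Z^{(n)})$. The upper bound $\mathrm{BP}(\mathcal{C}_\pm^{(n)}(\tau)) \leq \tau$ then follows: \autoref{Theo:mainDiscrete} supplies, for some $i_0 \in I_\tau$, a contamination of size $\tau n$ driving $\hat{T}_{Z^{(n)}}(u_{i_0})$ to infinity, which places a point at infinity on $\mathcal{C}_\pm^{(n)}(\tau; Z^{(n)})$ while $\mathcal{C}_\pm^{(n)}(\tau; X^{(n)})$ is bounded. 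For the lower bound, if $\ell < \tau n - (d-1)$ then \autoref{Theo:mainDiscrete} forces $\hat{T}_{Z^{(n)}}(u_i)$ to remain bounded for every $i$ with $\mathrm{TD}(u_i; u^{(n)}) \geq \tau$.

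The main obstacle is translating pointwise boundedness at images of deep reference points into Hausdorff-closeness of the full contour or median set, which lives in $\R^d$ and is defined through the cyclically monotone extension $\psi_n$. The cleanest route is to use that, for $\tau > 0$, the Tukey contour of $u^{(n)}$ at level $\tau$ is compact and contained in $\mathrm{conv}(u^{(n)})$, and its preimage under $\nabla\psi_n$ is contained in $\mathrm{conv}(\{\hat{T}_{Z^{(n)}}(u_i) : \mathrm{TD}(u_i; u^{(n)}) \geq \tau\})$ by cyclical monotonicity, hence compact whenever these finitely many sample images are simultaneously bounded. Combining this with continuous dependence of the contour on the sample values makes the quantitative bounds of \autoref{Theo:mainDiscrete} transfer verbatim to the corollary.
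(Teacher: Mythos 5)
The overall strategy is sound and matches what the paper implicitly intends: both parts of the corollary should reduce to \autoref{Theo:mainDiscrete} (equivalently \autoref{Theo:mainDiscrete2}) via the observation that $\hat{T}_{Z^{(n)}}(u_i)$ belongs to the contaminated contour/median precisely when $u_i$ has the relevant Tukey depth in $u^{(n)}$, and that $u^{(n)}$ itself is never perturbed. The upper-bound reasoning (send $\hat{T}_{Z^{(n)}}(u^*)$ to infinity, pull a point of the contour to infinity while the uncontaminated contour is bounded) is correct in spirit.

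However, there are two genuine gaps in the lower-bound argument.

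First, the containment you invoke, namely that $(\nabla\psi_n)^{-1}(K_\tau) \subset \mathrm{conv}\{\hat{T}_{Z^{(n)}}(u_i) : \mathrm{TD}(u_i; u^{(n)}) \ge \tau\}$, is false. Take $d=1$, $u^{(n)} = X^{(n)} = \{1,2,3\}$, $\hat{T} = \mathrm{id}$, $\tau = 2/3$, so the only reference point at depth $\ge \tau$ is $u=2$ and the right-hand side is $\{2\}$. Choose the cyclically monotone interpolation $\psi_n$ so that $\nabla\psi_n$ is flat at the value $2$ on $[3/2, 5/2]$ (this is a valid convex differentiable interpolant). Then $(\nabla\psi_n)^{-1}(\{2\}) = [3/2,5/2]$, which is not contained in $\{2\}$. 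The set is still bounded, so the conclusion you want to draw (compactness) happens to hold here, but the argument that produces it cannot be a convex-hull containment. The correct mechanism, which is what the paper's \autoref{lemma:conetocone} isolates, is the halfspace/cone estimate: if $x\to\infty$ in direction $v$ with $\nabla\psi_n(x) \in K_\tau$, monotonicity with the $n-\ell$ uncontaminated sample points (which remain in a fixed ball) forces $\nabla\psi_n(x)$ into a closed halfspace that is extreme for those $n-\ell$ reference images, so that $\mathrm{TD}(\nabla\psi_n(x); u^{(n)})$ is bounded by $(\ell + d)/n$ or so, contradicting membership in $K_\tau$. Note also that the number of reference indices that can have unbounded image is at most $\ell$, not merely those of low Tukey depth; using this is what gives a uniform bound over all contaminations of size $\ell$.

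Second, you fix a "deepest reference point $u^* \in u^{(n)}$" with $\mathrm{TD}(u^*;u^{(n)}) = \tau^*$, but $\tau^*$ is by definition $\max_{u\in\R^d}\mathrm{TD}(u;u^{(n)})$ and the Tukey median region of $u^{(n)}$ need not contain any $u_i$ in dimension $d\ge 2$. When it does not, $\max_i \mathrm{TD}(u_i;u^{(n)})$ can be strictly smaller than $\tau^*$, and whether $\max_x D_\pm^{(n)}(x) = \tau^*$ then depends on whether the range of $\nabla\psi_n$ actually reaches the Tukey median region. Your argument for part 1 requires $\mathbf{Q}_\pm^{(n)}(u^*)$ to be a sample point, so it only goes through for the $u_i$ realizing $\max_i\mathrm{TD}(u_i;u^{(n)})$; the passage from this to the stated $\tau^*$ needs a separate justification (e.g.\ via the specific interpolation $\psi_n$ used in \cite{HallinDelBarrioCuestaAlbertosMatran.21}).
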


 In \autoref{Sec:MainResultsCont}, we provide a slightly more general version of \autoref{coro:CenterOutward} that covers the case where the reference sequence is not assumed to be in general position. We also provide a continuous population counterpart that provides an asymptotic breakdown point in the sense of Hampel \cite{hampel1971}.

 \begin{remark}\label{rem:ref_measure}
     The choice of  $u^{(n)}$ affects the value of  $\tau^*$ which in turn affects the breakdown points of the transport-based median and depth contours. If $u^{(n)}$  is such that   $\tau^*=1/2$, the resulting median  preserves the high breakdown point of its univariate counterpart  {for a sufficiently large sample size}. However, $u^{(n)}$ should not be chosen very ``organized'' as the  general position assumption could be violated. If $u^{(n)}$  is a sample of a  distribution $\mu$ absolutely continuous  with respect to  the Lebesgue measure $\mathcal{L}_d$ and $n>d$, then the reference measure is in general position with probability $1$. Recall that $\mu$ is said to be halfspace symmetric if it attains the maximum Tukey depth of 1/2 (see e.g.~\cite{zuo:serfling2000}). Furthermore, if additionally, $\mu$ is halfspace symmetric  then $u^{(n)}$ has maximal Tukey depth of $1/2+o(1)$ for $d$ fixed as $n\to \infty$  {(cf.~\autoref{lemma:Consistency-ofTukey} below)}. 
 {     Note that  all the choices that have been suggested as reference measures for transport-based quantiles (see \cite{ChernozhukovEtAl.17.AOS,Figalli.2018,HallinDelBarrioCuestaAlbertosMatran.21,huang:sen2023,delBarrioGonzalezSanz.2024}) are halfspace symmetric.  For instance,  the uniform on the hypercube $[-1,1]^d$, isotropic Gaussians and the spherical uniform distribution   attain the maximal Tukey depth of $1/2$ (see \cite{donoho:gasko1992}).}
 \end{remark}
 {
The following lemma justifies the convergence of the empirical Tukey depth mentioned in the previous remark. The proof is quite standard and it is relegated to \autoref{Section:Proof-of-lemma-Tukey}. Similar results can be found in \cite[Eq.~(6.2)]{donoho:gasko1992} and \cite[Proposition~2.4]{masse2002} when $\mu_n$ is the (random) empirical measure. 
\begin{lemma}\label{lemma:Consistency-ofTukey}
     Let $\mu\in \mathcal{P}^{a.c.}(\R^d)$. Assume that $\mu_n $ converges in distribution to $\mu$. Then, 
     $$  \sup_{u\in \R^d}|{\rm TD}(u; \mu_n) - {\rm TD}(u; \mu)|\to 0.$$ As a consequence, if $\mu$ has  maximum Tukey depth $1/2$, then $$\lim_{n\to \infty}\sup_{u} {\rm TD}(u;\mu_n) = \frac{1}{2}. $$  
 \end{lemma}}
 \begin{remark} Part 2 of \autoref{coro:CenterOutward} can be viewed as a multivariate analogue of the well-known breakdown point of $\min\{\tau, 1-\tau\}$ for empirical $\tau$-quantiles. Note however that in the univariate case, the $\tau$-depth contour is not a standard $\tau$-quantile, but a ``center-outward quantile'' as described in \cite{HallinDelBarrioCuestaAlbertosMatran.21}. Indeed, the left to right orientation of standard quantiles is replaced by a center-outward orientation that is more amenable to be generalized to dimension $d\geq 2$. As such, the $1/4$-depth contour does not correspond to the $1/4$-quantile but to end-points of the interquantile range, i.e., the $1/4$- and $3/4$-quantiles. Therefore the probability contained inside a $1/4$-contour is $1/2$, and more generally the probability mass contained inside the region defined by a $\tau$-contour is $1-2\tau$.      
 \end{remark}

\subsection{Related work}

There is a rich literature focusing on deriving the breakdown point of robust multivariate estimators of location and designing high breakdown point methods. It has been well established that while univariate M-estimators defined by a bounded score function have a high breakdown point, many natural multivariate generalizations of them can have a dimension dependent breakdown \cite{maronna1976,donoho:huber1983,huber1984,tyler1994,maronna:yohai1995,kent:tyler1996}. 
Because the definition of the univariate median relies on the canonical ordering of the real line, it can be generalized in multiple natural ways to multivariate settings starting with $d=2$. As a result, numerous definitions of multivariate medians have been studied in the literature. Prominent examples include the geometric/spatial median \cite{koltchinskii1994,chaudhuri1996,minsker2015,minsker:strawn2024}, the Tukey median \cite{Tukey.1975,donoho:gasko1992,chen:gao:ren2018}, projection depth-based medians \cite{adrover:yohai2002,zuo2003,ramsay:durocher:leblanc2021,depersin:lecue2023}, among others. The coordinatewise median and the geometric median  \cite{koltchinskii1994,chaudhuri1996,minsker2015,minsker:strawn2024} can be computed efficiently and are known to have high breakdown point of $1/2$ \cite{lopuhaa:rousseeuw1991}. However they do not correspond to natural notions of depth  as discussed in \cite{zuo:serfling2000,ChernozhukovEtAl.17.AOS}.  Perhaps the most popular depth-based median is the Tukey median and is known to have a breakdown point of $1/3$ \cite{donoho:gasko1992}. Some other depth-based medians attain the maximal breakdown point of $1/2$ \cite{tyler1994,zuo2003}. However, we note that computing the Tukey median and high breakdown point projection-based estimators is NP hard \cite{bernholt2006} which significantly reduces their applicability for  $d$ moderately large. We note that not only do  transport-based medians have a high breakdown point, but they can also be computed in polynomial time with a computational cost of $O(n^3)$, irrespective of the dimension (cf.\  \cite[ {Remark~3.3}]{CuturiPeyre.19}),  where $n$ is the sample size. 
 Furthermore, transport-based quantiles can also naturally be used to define multivariate ranks. Such rank statistics are distribution free and have been shown to be useful for constructing statistically efficient nonparametric tests 
\cite{HallinDelBarrioCuestaAlbertosMatran.21, DebSen.19, Shietal.2022,deb:bhattacharya:sen2025}.

{Another popular notion of multivariate quantiles are the spatial or geometric quantiles \cite{chaudhuri1996}. Their breakdown point properties have been recently studied by \cite{konen:paindaveine2023,konen:paindaveine2025}. See \cite{hallin:konen2024} for an extensive discussion of the relative merits of transport-based quantiles and spatial quantiles. More general notions of multivariate quantiles have been explored in the recent work of  \cite{konen:paindaveine2022,konen2025}.}

{ As noted in \cite{ronchetti2023}, the robustness of optimal transport in the sense of robust statistics measures such as breakdown point and influence functions were unexplored.  Our results and the parallel work of \cite{paindaveine:passeggeri2024} constitute the first breakdown point analysis of transport maps.}   { Our results cover the discrete-discrete and continuous-continuous cases, whereas \cite{paindaveine:passeggeri2024} focused on the semi-discrete.   We recall that the semi-discrete (empirical) optimal transport map $\hat{T}^{SD}$ is the transport map from the reference measure $\mu$ to the empirical measure $P_n$. In the context of transport-based quantiles, this estimator has been proposed by \cite{GhosalSen.19}. For this estimator, \cite{paindaveine:passeggeri2024} showed that  ${\rm BP}(\hat{T}^{SD}(u_i))= \frac{\lceil n\cdot \mathrm{TD}(u_i; u^{(n)})  \rceil }{n} $. Hence, comparing this to \autoref{Theo:mainDiscrete}, we can affirm that the semi-discrete estimator is more robust than the discrete-discrete.  }

The influence function  approach is commonly referred to as infinitesimal robustness since it aims to quantify the impact of small amounts of contamination in the statistical functional of interest by finding a G\^ateaux derivative at a fixed perturbation $\delta_x$ \cite{hampel1974,hampeletal1986}.  In the context of transport maps, to the best of our knowledge, the form of the influence function 
$ \frac{d}{dt } \big\vert_{t=0}T_{\mu\to (1-t)P+t \delta_x} $ has not been established. The closest results  we found in the literature involve finding the derivative of 
$ \frac{d}{dt } \big\vert_{t=0}T_{\mu\to (1-t)P+t Q} $
when $Q$ is regular enough (see \cite{manole2024centrallimittheoremssmooth} for periodic measures  and \cite{gonzálezsanz2024linearizationmongeampereequationsdata} for the general setting).

\subsection{Organization of the paper}

The rest of the paper is organized as follows: In \autoref{Sect:Defintions}, we introduce the key definitions related to the population transport problem that we will need in this work.  In particular, we introduce the general population optimal transport problem, the canonical extension of transport maps, transport-based depth contours and an appropriate notion of  breakdown point.  
In \autoref{Sec:MainResultsCont}, we state the main results of the paper in the form of the breakdown point of the empirical and population transport maps evaluated at any point in the support of the reference measure. These results connect the breakdown point of the transport maps to the Tukey depth of the point in the reference measure, showing that  transport-based depth contours of order $\tau \in (0,1/2)$ have a breakdown point of $\tau$. Finally, \autoref{Sect:proofs}
 contains the proofs of all the results presented in the paper.

\section{Preliminaries}\label{Sect:Defintions}
In this section we introduce the population optimal transport problem and some important notions that we will require to state our main results. { The technical considerations discussed below  will be needed to formally define the asymptotic breakdown point of the transport-based quantiles. In particular, in order to define the transport map at any point we introduce maximal {  (cyclical)} monotone extensions which have also been considered in \cite{HallinDelBarrioCuestaAlbertosMatran.21, delBarriodoGonzalezSanzHallin.24, segers2022graphicaluniformconsistencyestimated,gonzálezsanz.2025.nonparametricvectorquantileautoregression}.}

\subsection{Notation}
The triple $(\Omega, \mathcal{A},\mathbb{P})  $ is the probability space. 
Let $\mathcal{P}(\R^d)$ be the set of Borel probability  measures over $\R^d$ and 
$ \mathcal{P}^{a.c.}(\R^d)$ 
be the set of probability measures absolutely continuous  with respect to  the $d$-dimensional Lebesgue measure $\mathcal{L}_d$. The  closure and interior of a set $A\subset \R^d$ is denoted as ${\rm cl}(A)$ and ${\rm int}(A)$, respectively, while ${\rm coh}(A)$ denotes the convex hull of $A$.  The support of a Borel probability measure $P$, denoted as ${\rm supp}(P)$,  is the smallest closed set  with ${P}$-probability $1$.  Let $2^{\R^d}=\{A: \ A\subset \R^d\}$  and  denote by ${\rm dom}(G)$ the domain of a set-valued function $G:\mathbb{R}^d\to 2^{\mathbb{R}^d}$ i.e., ${\rm dom}(G)$ is the set of all $x$ such that $G(x)\neq \emptyset$.  {The domain of a proper function $f:\R^d\to (-\infty, \infty]$ is denoted by ${\rm dom}(f)=\{x\in \R^d: f(x)\in \R\}$.} Finally,  let  $\mathbb{B}=\{x\in\mathbb{R}^d: ~\|x\|\leq 1\}$.
\subsection{Extension of monotone transport maps}\label{Sec:extension}
McCann's Theorem is one of the fundamental results in the measure transportation literature. It states that given two probability measures $\mu $ and $P$, with $ \mu \in  \mathcal{P}^{a.c.}(\R^d) $, there exists a unique a.s.\ defined gradient $\nabla \varphi=\nabla \varphi_{\mu\to P}$ of a lower semicontinuous (l.s.c.) convex function  $\varphi$  pushing $\mu$ forward to $P$ (see \cite{McCann.95}).  Moreover, if $\int \|u\|^2d\mu(u)$ and $\int \|x\|^2dP(x)$ are finite, $\nabla \varphi$ is the unique solution of the so-called Monge problem 
$$ \min_{T_\# \mu=P} \int \|T(u)-u\|^2 d\mu(u),  $$
where $(T_\# \mu)(\cdot)=\mu(T^{-1}(\cdot))$ denotes the push-forward measure. We call $\nabla \varphi$ {\it optimal transport map} even if the second order moments of $\mu$ and $P$ are not finite. 
The {\it canonical extension} of the transport map $\nabla \varphi$ is the subdifferential (see \cite{HallinDelBarrioCuestaAlbertosMatran.21,delBarriodoGonzalezSanzHallin.24,segers2022graphicaluniformconsistencyestimated,Figalli.2018,delBarrioGonzalezHallin.2020}), that is
\[
\R^d\ni u\mapsto \partial \varphi(u) = \left\{ x \in \mathbb{R}^n :
 \ \varphi(z) \geq \varphi(u) + \langle x, z - u \rangle \quad \forall z \in \mathbb{R}^n \right\}.
\]
It holds that $\{\nabla \varphi(u)\}= \partial \varphi(u)$ for $\mu$-a.e.\ $u\in \R^d$, which shows that $ \partial \varphi$ is indeed an extension of $ \nabla \varphi$.  However,  $\partial \varphi$ is no longer taking values in $\R^d$, but in $2^{\R^d}$. That is, $\partial \varphi$ is a set-valued map. 
A celebrated theorem by Rockafellar (see \cite{Rockafellar.MaximalMon}) states that subdifferentials of convex functions are {\it maximal {  cyclically} monotone}, i.e., {  for any choice of points $u_0,u_1,\dots,u_m$ (for arbitrary $m\geq 1$) and $x_i\in \partial \varphi(u_i)$, one has
\begin{equation}
    \label{MonotoneQuant}
    \langle x_1 , u_1 - u_0 \rangle +\langle x_2 , u_2- u_1 \rangle +\dots+\langle x_{m-1}, u_m - u_{m-1} \rangle+ \langle x_m, u_0 - u_m \rangle \geq 0 ,
\end{equation}
}
and its graph is not strictly contained in the graph of any  other set-valued mapping satisfying \eqref{MonotoneQuant}. 

\begin{remark}\label{remark:UniqueExtension}
   {  We note that although $\nabla \varphi$ is the unique gradient of a convex function pushing $\mu$ forward to $P$, the convex function $\varphi$ itself might not be unique (cf. \cite{McCann.95}); consequently, neither is its subdifferential $\partial \varphi$.} In \autoref{Sec:MainResultsCont} we show that the breakdown point of the canonical extension of the transport maps does not depend on the choice of the extension.    
\end{remark}

\subsection{ Population transport-based depth contours}\label{Sec:ContCOntorurs}

We call \( \mu \in \mathcal{P}^{a.c.}(\R^d)\)  the  {\it reference measure}.   The {\it transport-based quantile function} is the unique ($\mu$-a.e.\ defined) gradient of a convex function \( \mathbf{Q}_{\pm}\) pushing $\mu$ forward to $P$. The {\it transport-based distribution function} $\mathbf{F}_{ \pm }$ of $P$ is the $P$-a.e.\ defined inverse $\mathbf{F}_{ \pm } =(\mathbf{Q}_{ \pm })^{-1} $ of the transport-based quantile map. 
We recall that our goal is to find the breakdown point of  $\mathbf{Q}_{ \pm }(x)$ for $x\in \R$. Let ${\rm dom}(\mathbf{Q}_{ \pm})$ denote the domain of $\mathbf{Q}_{ \pm}$ and  note that in general  $\mathbf{Q}_{ \pm}$ is well-defined $\mu$-a.e., so that  $\mathbf{Q}_{ \pm }(x)$ might be undefined for some $x\in \R^d$.  Hence, even to guarantee that ${\rm dom}(\mathbf{Q}_{ \pm})$ contains the support of $\mu$ one needs to impose regularity conditions over $\mu$ and $P$ in order to guarantee that $\mathbf{Q}_{ \pm}$ and $\mathbf{F}_{ \pm}$ are well-defined everywhere.  A standard condition is to assume  that $\mu$ and $P$ have densities supported on a convex set locally bounded away from zero and infinite   \cite{Caffarelli.92, Figalli.17}.  However, one of the most natural choices of $\mu$, the so-called spherical uniform distribution $\mu_d$  
has  density 
$ {C_d}{\|x\|^{1-d}} $ for some dimension-dependent constant $C_d>0$ \cite{HallinDelBarrioCuestaAlbertosMatran.21}. Since $\mu_d$ 
has a singularity at zero for $d>1$, it violates the aforementioned standard regularity condition.\footnote{One can therefore not rely on this notion of regularity for $\mu_d$ in order to get well-defined quantile and distribution functions, irrespective of the regularity of $P$.  However, far away from zero, one can still show that the quantile map is well-defined for $P\in \mathcal{P}^{a.c.}(\R^d)$ and has a  density supported on a convex set locally bounded away from zero and infinite (cf. \cite{ Figalli.2018,delBarrioGonzalezHallin.2020,delBarrioGonzalezSanz.2024}). 
} In particular, the median $\mathbf{Q}_{ \pm}(0)$, might be undefined for some arbitrarily regular probability measure $P\in \mathcal{P}^{a.c.}(\R^d)$, for $d>2$. (Note that for $d=2$ the Monge–Ampère equation have a slightly nicer behavior, see \cite{Figalli.2018}.)  
In \cite{delBarriodoGonzalezSanzHallin.24} this  issue is solved by extending $\mathbf{Q}_{\pm}$ in the { ``canonical way''} introduced  for transport maps. 
Specifically, a {\it set-valued quantile function} is 
\[
\R^d\ni u\mapsto \mathbb{Q}_\pm (u):=\partial \phi(u) = \left\{ x \in \mathbb{R}^n :
 \ \phi(z) \geq \phi(u) + \langle x, z - u \rangle \quad \forall\, z \in \mathbb{R}^n \right\},
\]
where $\phi:\R^d\to (-\infty, +\infty]$ is any l.s.c.\  convex function such that $\nabla \phi(u)= \mathbf{Q}_{ \pm }(u)$ for $\mu$-a.e.\ $u\in \R^d$.  This extension allows us to define the {\it transport-based depth}  of a point $x\in \R^d$  with respect to  a probability measure $P$ as the set
 $$ {\rm D}_\pm(x)={\rm D}_\pm(x;P):={\rm TD}( \mathbb{Q}_{ \pm }^{-1} (x); \mu), $$ 
where 
$$ \mathrm{TD}(u; \mu):= \min_{v\in \mathcal{S}^{d-1}} \mu\left( \{z: \langle v, z - x \rangle \geq 0\} \right) $$
is the celebrated Tukey depth and 
$ \mathbb{Q}_{ \pm }^{-1}(u)=\{u\in \R^d:  x\in \mathbb{Q}_{ \pm }^{-1} (u)\}  $ is the set-valued inverse of $ \mathbb{Q}_{ \pm }$.  We assume throughout the rest of the paper that the reference measures used to define the quantiles are halfspace symmetric. This is a standard assumption in the literature; c.f.  \autoref{rem:ref_measure}.  
The {\it transport-based depth  contours} are defined as
$$ {\cal  C}_\pm(\tau)=\mathcal{C}_\pm(\tau; P):=\{x\in \R^d: \ {\rm D}_\pm(x)=\tau \}, \quad \tau\in [0,1/2]$$ 
and the transport-based regions as
$$\mathcal{R}_{\pm}(\tau)= \{x\in\mathbb{R}^d: \mathrm{D}_\pm (x)\geq \tau\}, \quad {\rm for} \ \tau\in (0,1/2).$$
{\it transport-based median as} 
$$m_\pm(P)=\argmax_{x\in\mathbb{R}^d}{\rm D}_\pm(x).$$

 {
We note that in the univariate case the transport-based median  coincides with the set of standard univariate medians. See \autoref{lem:univariate-median} for a formal statement.
}

\subsection{Breakdown points of population contours}\label{Sect:breakdownSet}

 We define the breakdown point of the depth contour ${\cal  C}_\pm(\tau;P )$ at   $P\in \mathcal{P}^{a.c.}(\R^d)$ as
    $${\rm BP}({\cal  C}_\pm(\tau;P )):= \inf \left\{ \varepsilon\in (0,1): \ \sup_{Q\in \mathcal{Q}_{\varepsilon}(P)} d_H(\mathcal{C}_\pm(\tau; P), \mathcal{C}_\pm(\tau; Q))= \infty\right \},
$$
 where ${\cal Q}_\eps(P)=\{ P+\eps (Q-P): \ Q\in \mathcal{P}(\R^d)\}\cap \mathcal{P}^{a.c.}(\R^d)$. Note that for $X_1,\dots,X_n\overset{iid}{\sim}P$ our results (\autoref{coro:CenterOutward} and \autoref{coro:median_contours_cont}) show that ${\rm BP}({\cal  C}_\pm^{(n)}(\tau)) \to {\rm BP}({\cal C}_{\pm}(\tau;P ))$ almost surely. One can therefore think of ${\rm BP}({\cal  C}_\pm(\tau;P ))$ as the asymptotic breakdown point of the $\tau$-depth contour at the distribution $P$.  This is  similar to the original definition of breakdown of Hampel \cite{hampel1971}.
 
Clearly our definition of the contour breakdown point  only makes sense if  the contour sets are compact. This is automatically guaranteed by 
\autoref{Lemma:TukeyPositiveCont}, the compactness of the Tukey depth's   {contours} and the definition of $\cal C_{\pm}(\tau).$

\begin{lemma}
\label{Lemma:TukeyPositiveCont} 
    Set $\mu,P\in \mathcal{P}^{a.c.}(\R^d)$. Let $\partial \varphi$ be any maximal {  cyclical} monotone extension of  the optimal transport map $\nabla \varphi$ from $\mu$ to $P$.  Then it holds that 
    \begin{equation}
        \label{eq:TukeyPositiveCont}
        \{u\in \R^d: 
    {\rm TD}(u;\mu)>0\}\subset {\rm int}\left({\rm dom}(\partial \varphi)\right).
    \end{equation}
    As a consequence, for any compact set $K\subset \{u\in \R^d: 
    {\rm TD}(u;\mu)>0\}$ it holds that 
    $\partial \varphi(K)=\bigcup_{u\in K} \partial \varphi(u)$ is  compact and nonempty.
\end{lemma}


\begin{proof}

We show first that
\begin{equation}
    \label{eq:TukeyCont1}
    \{u\in \R^d: 
    {\rm TD}(u;\mu)>0\}\subset {\rm int}({\rm coh} (\supp (\mu))),
\end{equation}
 {where for a set $A\subset \R^d$,  $ {\rm coh} (A)$ denotes its convex hull. We fix $u\notin {\rm int}({\rm coh} (\supp (\mu)))$. Then by the separation theorem (see e.g.,~\cite[Theorem~11.2]{Rockafellar.70}) there exists a hyperplane $ H=\{ x\in \R^d : \langle x, a\rangle=b\} $ such that $u\in H$ and  $$  {\rm int}({\rm coh} (\supp (\mu))) \subset {\rm int } (H_-)=\{ x\in \R^d : \langle x, a\rangle < b\}. $$ 
As a consequence, it follows that 
\begin{equation}
    \label{eq:TukeyCont0}
    \supp (\mu)\subset {\rm cl}({\rm coh} (\supp (\mu)))= {\rm cl}({\rm int}({\rm coh} (\supp (\mu)))) \subset H_- =\{ x\in \R^d : \langle x, a\rangle \leq  b\},
\end{equation}
where the first equality follows from \cite[Theorem~6.3]{Rockafellar.70} applied to the convex set \({\rm coh} (\supp (\mu))\). (Note that the relative interior of \({\rm coh} (\supp (\mu))\) coincides with its interior; otherwise, we would have \(\mu({\rm coh} (\supp (\mu))) = 0\).) Define  $H_+=\{ x\in \R^d : \langle x, a\rangle \geq b\}$. Since the hyperplane $H$  --- the boundary of $H_+$ --- is negligible for the Lebesgue measure and  $\mu\in \mathcal{P}^{a.c.}(\R^d)$,  \eqref{eq:TukeyCont0} implies that   
$$ \mu( H_+) = \mu ({\rm int}(H_+))= \mu (\R^d\setminus H_-) \leq \mu (\R^d\setminus {\rm supp}(\mu) )=0 .  $$
As $u $ belongs to the boundary  of the closed halfspace $H_+$, then $ {\rm TD}(u;\mu)=0$. We have established that $\{u\in \R^d: 
    {\rm TD}(u;\mu)=0\}\supset \mathbb{R}^d\setminus{\rm int}({\rm coh} (\supp (\mu)))$ and hence  \eqref{eq:TukeyCont1} follows. 
}
 Next, 
     as $\nabla \varphi$ pushes $\mu$  forward to $P$, it holds that  $\mu({\rm cl}({{\rm dom}(\partial \varphi)}))=1$. Then, by definition of ${\rm supp}(\mu)$, it holds that 
     ${\rm supp}(\mu)\subset {\rm cl}({{\rm dom}(\partial \varphi)})$.   
By Theorem~12.41 and the first display after in \cite{RockafellarWets.98}, it follows that ${\rm cl}({{\rm dom}(\partial \varphi)})$  is convex and 
$${\rm int}({{\rm dom}(\partial \varphi)}) = {\rm int}({\rm cl}({{\rm dom}(\partial \varphi)})) \supset {\rm int}({\rm coh}({\rm supp}(\mu))). $$
From this display and  
\eqref{eq:TukeyCont1}, we derive \eqref{eq:TukeyPositiveCont}. 
   The final conclusion follows from Theorems 23.4 and 24.7 in \cite{Rockafellar.70}. 
\end{proof}

Our proof of the breakdown of ${\cal  C}_\pm(\tau;P )$ relies on breaking the extended transport map  $\partial \varphi$.  This is the key workhorse of our argument. Since $\partial \varphi(u)\neq \emptyset$ might not be unique (cf. \autoref{remark:UniqueExtension}), we use a selection functional to define its breakdown point as follows 
$${\rm BP}(\partial \varphi(u), P)= \min \left\{ \eps\in (0,1): \sup_{\varphi_{\mu\to \nu}\in \Gamma_\eps(P)} \inf_{\mathrm{v}\in \partial \varphi_{\mu\to \nu} (u)} \|\mathrm{v}\| = \infty\right \},
$$
where $\Gamma_\eps(P)$ is the set of l.s.c.\ convex functions $\varphi_{\mu\to \nu}$ such that $\nabla \varphi_{\mu\to \nu}$ pushes $\mu$ forward to  $\nu\in {\cal Q}_\eps(P)$. This selection functional defines the breakdown point as the smallest $\varepsilon$ such that  the smallest element of $\partial\varphi_{\mu\to\nu}(u)$ explodes.
It turns out that the stronger requirement that all the elements $\partial\varphi_{\mu\to\nu}(u)$ explode leads to the same breakdown point. See \autoref{rem:anotherBP}.

\section{Main results}\label{Sec:MainResultsCont}

\subsection{Breakdown points of the optimal transport map}

This section contains a generalization of \autoref{Theo:mainDiscrete}, valid for reference samples in non-general position,  and  its population version.  We define the {\it lower Tukey depth} of $u$ with respect to the dataset $u^{(n)}$ as 
\[
\mathrm{TD}^-(u; u^{(n)}):= \frac{n+1}{n}-\max_{v\in \mathcal{S}^{d-1}} \left\{ \frac{1}{n} \sum_{i=1}^{n} \mathbf{1}_{\left( \langle v, u_i - u \rangle \geq 0 \right)} \right\}.
\]
The following result characterizes the breakdown point of the empirical transport problem \eqref{DiscreteOT}.
\begin{theorem}    \label{Theo:mainDiscrete2}
    Let $X^{(n)}=\{X_1, \dots, X_n\}$, $u^{(n)}=\{u_1, \dots, u_n\}$ and $\hat{T}$ be as in \eqref{DiscreteOT}. Then the breakdown point of $\hat{T}(u_i)$ lies in between  the lower Tukey depth and the Tukey depth of $u_i$ with respect to the dataset $u^{(n)}$, i.e.,   
    $$
    {\rm BP}(\hat{T}(u_i))\in [\mathrm{TD}^{-}(u_i; u^{(n)}),\mathrm{TD}(u_i; u^{(n)})].
$$
\end{theorem}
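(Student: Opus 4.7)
The plan is to prove the two inequalities separately. The upper bound $\mathrm{BP}(\hat T(u_i)) \leq \mathrm{TD}(u_i; u^{(n)})$ will follow from an explicit breaking contamination, while the lower bound $\mathrm{BP}(\hat T(u_i)) \geq \mathrm{TD}^-(u_i; u^{(n)})$ will come from a cyclic monotonicity argument in a suitable limit. Throughout I will use that any optimizer $\hat T$ of \eqref{DiscreteOT} satisfies the pairwise monotonicity $\langle \hat T(u_j) - \hat T(u_k), u_j - u_k\rangle \geq 0$, equivalent to the no-swap condition for an optimal assignment under quadratic cost.

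For the upper bound, I will choose $v \in \mathcal{S}^{d-1}$ attaining the Tukey depth of $u_i$ and set $I := \{j : \langle v, u_j - u_i\rangle \geq 0\}$, so $|I| = n \cdot \mathrm{TD}(u_i; u^{(n)})$ and $i \in I$. The contamination is $Z_j := Mv$ for $j \in I$ and $Z_j := X_j$ otherwise, which lies in $\mathcal{Q}_{|I|, n}$ for $M$ sufficiently large (after a harmless perturbation to enforce distinctness if needed). The key claim is that for large $M$ every optimal bijection must send $\{u_j : j \in I\}$ onto the $|I|$ contaminated positions: otherwise some $u_{j^\star}$ with $j^\star \in I$ would be paired with an $X_k$ ($k \in I^c$) while some $u_{j'}$ with $j' \in I^c$ would be paired with $Mv$, and swapping the two assignments would change the total cost by $2M \langle v, u_{j^\star} - u_{j'}\rangle + O(1)$, which is strictly positive for $M$ large since $\langle v, u_{j^\star}\rangle \geq \langle v, u_i\rangle > \langle v, u_{j'}\rangle$, contradicting optimality. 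Since $i \in I$, this forces $\hat T(u_i) = Mv$, whose norm diverges.

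For the lower bound, I will argue by contradiction: suppose contaminations $Z^{(n,k)} \in \mathcal{Q}_{\ell, n}$ drive $\|\hat T^{(k)}(u_i)\| \to \infty$. Passing to a subsequence, I can fix the set $J$ of contaminated positions ($|J| \leq \ell$), decide for each $m$ whether $\|\hat T^{(k)}(u_m)\|$ remains bounded or escapes to infinity, and force $\hat T^{(k)}(u_i)/\|\hat T^{(k)}(u_i)\| \to v^\star$ on the unit sphere. Let $U := \{m : \|\hat T^{(k)}(u_m)\| \to \infty\}$; since any un-contaminated image $X_j$ with $j \notin J$ is bounded, the bijectivity of $\hat T^{(k)}$ forces $|U| \leq |J| \leq \ell$, and by assumption $i \in U$. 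Pairwise monotonicity gives $\langle \hat T^{(k)}(u_i) - \hat T^{(k)}(u_m), u_i - u_m\rangle \geq 0$; dividing by $\|\hat T^{(k)}(u_i)\|$ and letting $k \to \infty$ shows that every $m \notin U$ satisfies $\langle v^\star, u_m - u_i\rangle \leq 0$. Hence $\{m : \langle v^\star, u_m - u_i\rangle > 0\} \subset U$, and since $i \in U$ lies outside that strict half-space I conclude $|\{m : \langle v^\star, u_m - u_i\rangle > 0\}| + 1 \leq \ell$. Minimizing the left-hand side over unit vectors and rewriting via the substitution $v \mapsto -v$ identifies the bound as $n \cdot \mathrm{TD}^-(u_i; u^{(n)}) \leq \ell$.

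The main obstacle is the combinatorial ``$+1$'' in the lower bound: it must come exactly from the fact that $u_i$ itself lies in $U$ but not in the strict half-space $\{m : \langle v^\star, u_m - u_i\rangle > 0\}$, which upgrades the naive inequality $\min_v |\{m : \langle v, u_m - u_i\rangle > 0\}| \leq \ell$ to $n \cdot \mathrm{TD}^-(u_i; u^{(n)}) \leq \ell$. The other delicate ingredient is the subsequence extraction, which is routine only because the configurations of contaminated indices and of bounded/unbounded-image statuses take finitely many values.
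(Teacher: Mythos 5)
Your proof is correct, and it proves both inequalities by a route that is genuinely different from the paper's. The paper deduces \autoref{Theo:mainDiscrete2} from \autoref{lemma:UpperDisc} and \autoref{lemma:LowerDisc}, both of which are instances of the cone-to-cone argument (\autoref{lemma:conetocone}): contamination is placed in a ball $kv_u+\mathbb{B}$ drifting to the horizon, and one derives a contradiction by comparing the $\nu_{\ell,k}$-mass of a thin cone $\mathcal C_{x_k,e_k,\theta_k}$ with the $\mu_n$-mass of its image $\mathcal A_{u_i,e_k,\theta_k}$, then letting $\theta_k\to 0$ via \autoref{lemma:ConvergenceToOneHalph}. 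By contrast, for the upper bound you build an explicit breaking contamination (all $|I|=n\,\mathrm{TD}(u_i;u^{(n)})$ points placed near $Mv$) and show directly via a no-swap/cost-comparison argument that every optimal assignment must match $u_i$ to a contaminated position; for the lower bound you bypass the cone lemma entirely by dividing the monotonicity inequality by $\|\hat T^{(k)}(u_i)\|$ and passing to the limit direction $v^\star$ along a subsequence. The mathematics underlying the lower bound is the same as the paper's (monotonicity plus a half-space count in the reference set), but your presentation is more elementary and has a concrete advantage: the observation that $i\in U$ while $i\notin\{m:\langle v^\star,u_m-u_i\rangle>0\}$ supplies the ``$+1$'' cleanly, whereas the paper's sketch (``repeat the same steps of \autoref{lemma:LowerCont2}\dots'') only tracks the mass of the unchanged points, and the extra unit of mass coming from the vertex of the cone has to be added to make the counting close exactly to $\mathrm{TD}^-$. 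Similarly, your upper-bound construction breaks with exactly $n\,\mathrm{TD}(u_i;u^{(n)})$ replacements, matching the theorem statement, while \autoref{lemma:UpperDisc} as written requires the strict inequality $\mathrm{TD}(u_i;u^{(n)})<\ell/n$ and would need the same ``vertex counts'' refinement to avoid an off-by-one. One small point worth spelling out in a final write-up: after the first subsequence extraction fixing the contaminated index set, the partition into bounded/unbounded images is obtained not because it ranges over a finite set per $k$, but by a standard Bolzano--Weierstrass/diagonal extraction guaranteeing that each $\|\hat T^{(k)}(u_m)\|$ either converges or tends to $+\infty$; and in the upper bound the perturbations $\delta_j$ should be fixed once (independent of $M$) so that the $O(1)$ term in the swap comparison is indeed uniform.
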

\begin{remark}\label{Rem:sharpness}
    As illustrated in \autoref{fig:Sharp} the lower bound in \autoref{Theo:mainDiscrete} is attained for some reference measures. The upper bound is also attained when we consider the reference and target datasets both equal to  $\{1,2,3,4,5\}\subset \R$. The Tukey depth of $3$ is $3/5$ and the breakdown point of the optimal transport map evaluated at $ 3$ is $3/5$. As a consequence, the bounds of \autoref{Theo:mainDiscrete}  are sharp. 
\end{remark}
\begin{remark}\label{Remark:ArbitralyBad}
    We also observe that  $ \mathrm{TD}^{-}(u_i;u^{(n)})$ and $\mathrm{TD}(u_i; u^{(n)})$ get closer as $n\to \infty$ if $\mu_n=\frac{1}{n}\sum_{i=1}^n\delta_{u_i}$ converges weakly to a probability measure that is absolutely continuous  with respect to  the Lebesgue measure. We underline that the breakdown point of the optimal transport map could be arbitrary small. Indeed,  if the reference sample $u^{(n)}$ is supported on a lower dimensional affine subspace and $X^{(n)}$ is very close to $u^{(n)}$,   a perturbation of  {a} single point will send  the optimal transport map evaluated at a particular $ u_i\in u^{(n)} $ to infinity  (see the construction of \autoref{fig:Sharp}). Thus, as the reference sample becomes more disordered or random, the robustness of the optimal transport map increases.
\end{remark}
\begin{figure}[h!]
    \centering
    \resizebox{0.99\textwidth}{!}{ 

\tikzset{every picture/.style={line width=0.75pt}} 

\begin{tikzpicture}[x=0.75pt,y=0.75pt,yscale=-1,xscale=1]

\draw [color={rgb, 255:red, 74; green, 144; blue, 226 }  ,draw opacity=1 ][line width=0.75]  [dash pattern={on 0.84pt off 2.51pt}]  (159,26) -- (159.17,60.83) -- (159.33,95.67) -- (159.5,130.5) -- (159.67,165.33) -- (159.83,200.17) -- (160,235) ;
\draw [shift={(160,235)}, rotate = 89.73] [color={rgb, 255:red, 74; green, 144; blue, 226 }  ,draw opacity=1 ][fill={rgb, 255:red, 74; green, 144; blue, 226 }  ,fill opacity=1 ][line width=0.75]      (0, 0) circle [x radius= 3.35, y radius= 3.35]   ;
\draw [shift={(159,26)}, rotate = 89.73] [color={rgb, 255:red, 74; green, 144; blue, 226 }  ,draw opacity=1 ][fill={rgb, 255:red, 74; green, 144; blue, 226 }  ,fill opacity=1 ][line width=0.75]      (0, 0) circle [x radius= 3.35, y radius= 3.35]   ;
\draw [color={rgb, 255:red, 74; green, 144; blue, 226 }  ,draw opacity=1 ][line width=0.75]  [dash pattern={on 0.84pt off 2.51pt}]  (159.5,130.5) -- (273.5,130.5) ;
\draw [shift={(273.5,130.5)}, rotate = 0] [color={rgb, 255:red, 74; green, 144; blue, 226 }  ,draw opacity=1 ][fill={rgb, 255:red, 74; green, 144; blue, 226 }  ,fill opacity=1 ][line width=0.75]      (0, 0) circle [x radius= 3.35, y radius= 3.35]   ;
\draw [shift={(159.5,130.5)}, rotate = 0] [color={rgb, 255:red, 74; green, 144; blue, 226 }  ,draw opacity=1 ][fill={rgb, 255:red, 74; green, 144; blue, 226 }  ,fill opacity=1 ][line width=0.75]      (0, 0) circle [x radius= 3.35, y radius= 3.35]   ;
\draw [color={rgb, 255:red, 74; green, 144; blue, 226 }  ,draw opacity=1 ][line width=0.75]  [dash pattern={on 0.84pt off 2.51pt}]  (45.5,130.5) -- (159.5,130.5) ;
\draw [shift={(159.5,130.5)}, rotate = 0] [color={rgb, 255:red, 74; green, 144; blue, 226 }  ,draw opacity=1 ][fill={rgb, 255:red, 74; green, 144; blue, 226 }  ,fill opacity=1 ][line width=0.75]      (0, 0) circle [x radius= 3.35, y radius= 3.35]   ;
\draw [shift={(45.5,130.5)}, rotate = 0] [color={rgb, 255:red, 74; green, 144; blue, 226 }  ,draw opacity=1 ][fill={rgb, 255:red, 74; green, 144; blue, 226 }  ,fill opacity=1 ][line width=0.75]      (0, 0) circle [x radius= 3.35, y radius= 3.35]   ;
\draw [color={rgb, 255:red, 208; green, 2; blue, 27 }  ,draw opacity=1 ][line width=0.75]  [dash pattern={on 0.84pt off 2.51pt}]  (494,25) -- (494.17,59.83) -- (494.33,94.67) -- (494.5,129.5) -- (494.67,164.33) -- (494.83,199.17) -- (495,234) ;
\draw [shift={(495,234)}, rotate = 89.73] [color={rgb, 255:red, 208; green, 2; blue, 27 }  ,draw opacity=1 ][fill={rgb, 255:red, 208; green, 2; blue, 27 }  ,fill opacity=1 ][line width=0.75]      (0, 0) circle [x radius= 3.35, y radius= 3.35]   ;
\draw [shift={(494,25)}, rotate = 89.73] [color={rgb, 255:red, 208; green, 2; blue, 27 }  ,draw opacity=1 ][fill={rgb, 255:red, 208; green, 2; blue, 27 }  ,fill opacity=1 ][line width=0.75]      (0, 0) circle [x radius= 3.35, y radius= 3.35]   ;
\draw [color={rgb, 255:red, 208; green, 2; blue, 27 }  ,draw opacity=1 ][line width=0.75]  [dash pattern={on 0.84pt off 2.51pt}]  (629,130) ;
\draw [shift={(629,130)}, rotate = 0] [color={rgb, 255:red, 208; green, 2; blue, 27 }  ,draw opacity=1 ][fill={rgb, 255:red, 208; green, 2; blue, 27 }  ,fill opacity=1 ][line width=0.75]      (0, 0) circle [x radius= 3.35, y radius= 3.35]   ;
\draw [shift={(629,130)}, rotate = 0] [color={rgb, 255:red, 208; green, 2; blue, 27 }  ,draw opacity=1 ][fill={rgb, 255:red, 208; green, 2; blue, 27 }  ,fill opacity=1 ][line width=0.75]      (0, 0) circle [x radius= 3.35, y radius= 3.35]   ;
\draw [color={rgb, 255:red, 208; green, 2; blue, 27 }  ,draw opacity=1 ][line width=0.75]  [dash pattern={on 0.84pt off 2.51pt}]  (380.5,129.5) -- (742,130) ;
\draw [shift={(380.5,129.5)}, rotate = 0.08] [color={rgb, 255:red, 208; green, 2; blue, 27 }  ,draw opacity=1 ][fill={rgb, 255:red, 208; green, 2; blue, 27 }  ,fill opacity=1 ][line width=0.75]      (0, 0) circle [x radius= 3.35, y radius= 3.35]   ;
\draw [color={rgb, 255:red, 208; green, 2; blue, 27 }  ,draw opacity=1 ][line width=0.75]  [dash pattern={on 0.84pt off 2.51pt}]  (742,130) ;
\draw [shift={(742,130)}, rotate = 0] [color={rgb, 255:red, 208; green, 2; blue, 27 }  ,draw opacity=1 ][fill={rgb, 255:red, 208; green, 2; blue, 27 }  ,fill opacity=1 ][line width=0.75]      (0, 0) circle [x radius= 3.35, y radius= 3.35]   ;
\draw [shift={(742,130)}, rotate = 0] [color={rgb, 255:red, 208; green, 2; blue, 27 }  ,draw opacity=1 ][fill={rgb, 255:red, 208; green, 2; blue, 27 }  ,fill opacity=1 ][line width=0.75]      (0, 0) circle [x radius= 3.35, y radius= 3.35]   ;
\draw    (159,26) -- (491,25.01) ;
\draw [shift={(494,25)}, rotate = 179.83] [fill={rgb, 255:red, 0; green, 0; blue, 0 }  ][line width=0.08]  [draw opacity=0] (10.72,-5.15) -- (0,0) -- (10.72,5.15) -- (7.12,0) -- cycle    ;
\draw    (45.5,130.5) .. controls (65.69,87.65) and (348.33,78.28) .. (379.26,127.22) ;
\draw [shift={(380.5,129.5)}, rotate = 245.47] [fill={rgb, 255:red, 0; green, 0; blue, 0 }  ][line width=0.08]  [draw opacity=0] (10.72,-5.15) -- (0,0) -- (10.72,5.15) -- (7.12,0) -- cycle    ;
\draw [color={rgb, 255:red, 0; green, 0; blue, 0 }  ,draw opacity=1 ]   (273.5,130.5) .. controls (276.96,85.45) and (709.71,78.63) .. (741.17,128.47) ;
\draw [shift={(742,130)}, rotate = 245.47] [color={rgb, 255:red, 0; green, 0; blue, 0 }  ,draw opacity=1 ][line width=0.75]    (10.93,-3.29) .. controls (6.95,-1.4) and (3.31,-0.3) .. (0,0) .. controls (3.31,0.3) and (6.95,1.4) .. (10.93,3.29)   ;
\draw    (159.5,130.5) .. controls (198.8,131.99) and (539.57,218.13) .. (627.69,131.32) ;
\draw [shift={(629,130)}, rotate = 134.02] [fill={rgb, 255:red, 0; green, 0; blue, 0 }  ][line width=0.08]  [draw opacity=0] (10.72,-5.15) -- (0,0) -- (10.72,5.15) -- (7.12,0) -- cycle    ;
\draw    (160,235) -- (492,234.01) ;
\draw [shift={(495,234)}, rotate = 179.83] [fill={rgb, 255:red, 0; green, 0; blue, 0 }  ][line width=0.08]  [draw opacity=0] (10.72,-5.15) -- (0,0) -- (10.72,5.15) -- (7.12,0) -- cycle    ;

\draw (159.5,139.9) node [anchor=north west][inner sep=0.75pt]    {$( 0,0)$};
\draw (275.5,133.9) node [anchor=north west][inner sep=0.75pt]    {$( 1,0)$};
\draw (47.5,133.9) node [anchor=north west][inner sep=0.75pt]    {$( -1,0)$};
\draw (161,29.4) node [anchor=north west][inner sep=0.75pt]    {$( 0,1)$};
\draw (162,238.4) node [anchor=north west][inner sep=0.75pt]    {$( -1,0)$};
\draw (631,133.4) node [anchor=north west][inner sep=0.75pt]    {$( k,0)$};
\draw (744,133.4) node [anchor=north west][inner sep=0.75pt]    {$( k+1,0)$};
\draw (382.5,132.9) node [anchor=north west][inner sep=0.75pt]    {$( -1,0)$};
\draw (496,28.4) node [anchor=north west][inner sep=0.75pt]    {$( 0,1)$};
\draw (497,237.4) node [anchor=north west][inner sep=0.75pt]    {$( -1,0)$};

\end{tikzpicture}

}
    \caption{Example showing that the lower bound of \autoref{Theo:mainDiscrete} is sharp. The reference and target datasets $u^{(n)}$ and $X^{(n)}$ are both $\{(0,0), (1,0), (0,1), (-1, 0), (0,-1)\}$. The Tukey depth of the origin is $ 3/5$ and the lower Tukey  depth is $2/5$. Perturbing two points of $X^{(n)}$  as in the red dataset of the picture, we find that the optimal transport map from the blue the the  red points maps $(0,0)$ to $(0,k)$, which escapes to the horizon. Therefore the breakdown point of the optimal transport map evaluated at the origin is $2/5$.}
    \label{fig:Sharp}
\end{figure}

We now present our main theorem, which asserts that the breakdown point of $\partial \varphi = \partial \varphi_{\mu \to P}$, evaluated at $u \in \mathbb{R}^d$, corresponds to the Tukey depth of $u$ in the reference measure $\mu$. 
Recall that $\partial \varphi$ is the subdifferential of a l.s.c.\ convex function $\varphi$ whose gradient pushes $\mu$ forward to $P$. 
\begin{theorem}    \label{Theo:mainCont}
    Set $\mu,P\in \mathcal{P}^{a.c.}(\R^d)$. Let $\nabla \varphi $ be the optimal transport map  pushing $\mu$ forward to $P$.   Then the breakdown point of a canonical extension $\partial \varphi$ at a point $u$ with $\mathrm{TD}(u; \mu)>0$  is the Tukey depth of  $u$  with respect to  $\mu$, i.e.,   
    $${\rm BP}(\partial \varphi(u), P)=\mathrm{TD}(u; \mu), \quad \text{irrespective of $\mu,P \in \mathcal{P}^{a.c.}(\R^d)$.}
$$
\end{theorem}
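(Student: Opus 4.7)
I would prove the equality $\mathrm{BP}(\partial\varphi(u), P) = \tau$, where $\tau := \mathrm{TD}(u;\mu) > 0$, by establishing matching lower and upper bounds. Both directions rest on cyclic monotonicity of the subdifferential of a convex function combined with the push-forward identity of $\nabla\varphi_{\mu\to\nu}$ and the Tukey-depth characterization of halfspaces. As a preliminary, by \autoref{Lemma:TukeyPositiveCont}, $u \in \mathrm{int}(\mathrm{dom}(\partial\varphi))$, so $\partial\varphi(u) \neq \emptyset$; moreover, any two l.s.c.\ convex functions whose gradients agree $\mu$-a.e.\ must coincide on $\mathrm{int}(\mathrm{coh}(\mathrm{supp}(\mu)))$ (continuous functions agreeing on a dense set), so their subdifferentials at $u$ agree, making the breakdown point independent of the choice of extension.

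\emph{Lower bound} $\mathrm{BP} \geq \tau$: Fix $\epsilon < \tau$ and suppose for contradiction there are $\nu_n \in \mathcal{Q}_\epsilon(P)$ and $y_n \in \partial\varphi_{\mu\to\nu_n}(u)$ with $\|y_n\| \to \infty$, and WLOG $v_n := y_n/\|y_n\| \to v \in \mathcal{S}^{d-1}$. Writing $\nu_n = (1-\epsilon)P + \epsilon Q_n$, define $B_{R,n} := \{u' : \partial\varphi_{\mu\to\nu_n}(u') \cap \overline{B}(0,R) \neq \emptyset\}$. The push-forward identity gives $\mu(B_{R,n}^c) \leq \nu_n(\{\|y\|>R\}) \leq (1-\epsilon)P(\|y\|>R) + \epsilon$; choosing $R$ large and a compact $K$ with $\mu(K^c) \leq (\tau-\epsilon)/4$ yields $\mu(B_{R,n} \cap K) \geq 1 - \epsilon - (\tau-\epsilon)/2$. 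For each $u' \in B_{R,n} \cap K$ with companion $y'_n \in \overline{B}(0,R)$, monotonicity $\langle y_n - y'_n, u - u'\rangle \geq 0$ gives $\langle v_n, u - u'\rangle \geq -2R\,\mathrm{diam}(K \cup \{u\})/\|y_n\|$, so $B_{R,n} \cap K \subset \{z : \langle v_n, z - u\rangle \leq o(1)\}$ as $n \to \infty$. By absolute continuity of $\mu$ (which kills the hyperplane $\{\langle v, z-u\rangle = 0\}$) and dominated convergence, $\mu(B_{R,n} \cap K) \to \mu(\{\langle v, z - u\rangle \leq 0\}) \leq 1 - \tau$, contradicting $\epsilon < \tau$.

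\emph{Upper bound} $\mathrm{BP} \leq \tau$: Pick $v \in \mathcal{S}^{d-1}$ attaining the Tukey depth (attained by continuity on the compact sphere, justified by dominated convergence and absolute continuity of $\mu$). For $\epsilon > \tau$, let $c$ satisfy $\mu(\{\langle v, z\rangle \geq c\}) = \epsilon$; since $\mu(\{\langle v, z-u\rangle \geq 0\}) = \tau < \epsilon$ and $t \mapsto \mu(\{\langle v, z\rangle \geq t\})$ is non-increasing, $c < \langle v, u\rangle$. For each $M > 0$, let $Q_M$ be uniform on $\overline{B}(Mv, r_0)$ for a fixed $r_0$, and set $\nu_M := (1-\epsilon)P + \epsilon Q_M \in \mathcal{P}^{a.c.}(\R^d) \cap \mathcal{Q}_\epsilon(P)$, with $\varphi_M$ the associated potential. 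A mirror-image monotonicity argument applied to $S_M := \nabla\varphi_M^{-1}(\overline{B}(Mv, r_0))$ (for which $\mu(S_M) = \epsilon$) shows that for $u_1 \in S_M$ and $u_2 \in S_M^c$ with $\nabla\varphi_M(u_2)$ in a bounded truncation of $\mathrm{supp}(P)$, $\langle v, u_1 - u_2\rangle \geq -O(1/M)$; hence $S_M$ converges in $\mu$-measure to the half-space $\{z : \langle v, z\rangle \geq c\}$. Since $\langle v, u\rangle > c$ strictly, $u$ lies in the interior of this half-space, and standard convex analysis (the characterization of $\partial\varphi_M(u)$ as the closed convex hull of cluster points of $\nabla\varphi_M(u_k)$ for $u_k \to u$ in the differentiability set) implies every element of $\partial\varphi_M(u)$ is contained in $\overline{B}(Mv, r_0)$. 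Therefore $\inf_{y \in \partial\varphi_M(u)} \|y\| \geq M - r_0 \to \infty$, showing $\mathrm{BP} \leq \tau$.

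The main obstacle lies in the upper bound: translating the $\mu$-measure convergence $S_M \to \{\langle v, z\rangle \geq c\}$ into the pointwise claim that every cluster point of $\nabla\varphi_M(u_k)$ at $u$ lies in $\overline{B}(Mv, r_0)$ requires careful convex-analytic bookkeeping, especially when $\mathrm{supp}(P)$ is unbounded (forcing a truncation) or when the pushforward of $\mu$ onto the direction $v$ has a flat CDF near $c$ (requiring a local approximation argument to ensure $u$ is not caught on a degenerate boundary of $S_M$). The lower bound, by contrast, is cleaner because monotonicity is applied uniformly over bulk sets.
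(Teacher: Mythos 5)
Your overall strategy matches the paper's: both directions rest on monotonicity of $\partial\varphi$, the push-forward identity, and a mass-count contradiction anchored on the Tukey-depth characterization of halfspaces. Your \emph{lower bound} is correct and is, if anything, slightly more streamlined than the paper's, which routes the same idea through a cone-to-cone lemma (\autoref{lemma:conetocone}); your direct halfspace estimate from monotonicity plus tightness of $P$ delivers the same contradiction.

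The \emph{upper bound}, however, has a genuine gap at the step where you pass from ``$S_M := \nabla\varphi_M^{-1}(\overline{B}(Mv,r_0))$ converges in $\mu$-measure to the halfspace $\{\langle v,z\rangle\geq c\}$'' together with ``$\langle v,u\rangle > c$'' to the pointwise conclusion ``every element of $\partial\varphi_M(u)$ lies in $\overline{B}(Mv,r_0)$.'' Convergence in $\mu$-measure of $S_M$ to a halfspace whose interior contains $u$ does \emph{not} imply that $S_M$ contains a neighborhood of $u$: the symmetric difference $S_M \,\triangle\, \{\langle v,z\rangle\geq c\}$ has small $\mu$-measure but can perfectly well include a shrinking ball around $u$, so the cluster-point characterization of $\partial\varphi_M(u)$ could pick up gradients that are nowhere near $Mv$. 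You flag this yourself as ``the main obstacle'' and say it ``requires careful convex-analytic bookkeeping,'' but the bookkeeping you gesture at does not close the gap, because the difficulty is topological rather than analytic.

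The clean fix is to argue by contradiction the other way around, which is essentially what the paper does. Suppose $x\in\partial\varphi_M(u)$ with $\|x\|\leq R$. For any $u'\in S_M$ and $y'\in\partial\varphi_M(u')\cap\overline{B}(Mv,r_0)$, monotonicity gives $\langle x - y', u - u'\rangle\geq 0$; writing $y'=Mv+w'$ with $\|w'\|\leq r_0$ and dividing by $M$ yields
$$\langle v, u'-u\rangle \;\geq\; -\frac{(R+r_0)\|u-u'\|}{M}.$$
Restricting to a large compact $K$ containing almost all of $\mu$, this shows $S_M\cap K \subset \{z : \langle v, z-u\rangle \geq -O(1/M)\}$. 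But $\mu(S_M)\geq \eps$ (push-forward), while $\mu(\{\langle v,z-u\rangle\geq -O(1/M)\})\to \mu(\{\langle v,z-u\rangle\geq 0\})=\tau<\eps$ by absolute continuity of $\mu$ and the choice of $v$. This contradiction rules out any bounded $x\in\partial\varphi_M(u)$, which is precisely the conclusion needed for $\inf_{y\in\partial\varphi_M(u)}\|y\|\to\infty$. This reformulation is exactly the content of the paper's \autoref{lemma:conetocone} and \autoref{lemma:LowerCont} (the cone $\mathcal{C}_{x,e,\theta}$ encodes the rays from the putative bounded $x$ to the contamination ball, and $\mathcal{A}_{u,e,\theta}$ is the thickened halfspace through $u$). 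Your lower-bound proof already argues in this contradiction-from-a-bounded-element style; once you align the upper bound with it, the two directions become symmetric and the argument closes.

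Two smaller points: $\mu(S_M)$ is not exactly $\eps$ but only $\geq\eps$ (since $(1-\eps)P$ may also deposit some mass in $\overline{B}(Mv,r_0)$), which is in fact the inequality you need; and your preliminary remark about independence of the extension is correct, but does rely on $u\in\mathrm{int}(\mathrm{dom}(\partial\varphi))$, which is exactly what \autoref{Lemma:TukeyPositiveCont} supplies under $\mathrm{TD}(u;\mu)>0$.
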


\begin{remark}\label{rem:anotherBP}After inspecting the proof of \autoref{Theo:mainCont} (cf. \autoref{lemma:LowerCont} and \autoref{lemma:LowerCont2}), we see that a stronger notion of breakdown point leads to the same quantitative result. Namely, for every $u\in \R^d$ such that $\mathrm{TD}(u; \mu)>0$, 
    $$\overline{\rm BP}(\partial \varphi(u), P)= \min \left\{ \eps\in (0,1): \sup_{\varphi_{\mu\to \nu}\in \Gamma_\eps(P)} \sup_{v\in \partial \varphi_{\mu\to \nu} (u)} \|v\| = \infty\right \}=\mathrm{TD}(u; \mu).$$
 \end{remark}
 {
\begin{remark}[General cost functions]
Our proofs are based on \autoref{lemma:conetocone}, where monotonicity is the main ingredient, and thus they are not easily adaptable to general transport costs. We conjecture that for costs of the form $\|x-y\|^p$—or, more generally, for the costs considered in \cite{GangboMcCann.96}—our results remain valid.
\end{remark}
}

\subsection{Breakdown points of the transport-based quantiles}

We can now easily obtain the breakdown point of the empirical and population depth contours using the results from the previous subsection. \autoref{Theo:mainDiscrete2}
 yields the breakdown points of the transport-based  median and depth contours for discrete probability measures, while \autoref{Theo:mainCont} gives the result for their continuous counterparts. In other words, we have obtained the finite sample and asymptotic breakdown points of the transport-based medians and depth contours.
\begin{corollary}\label{coro:median_contours_disc}
Let x Let $X^{(n)}=\{X_1, \dots, X_n\}$ and  $u^{(n)}=\{u_1, \dots, u_n\}$ be respectively the observed and reference samples. Fix  {$$N=\max_{(v,u)\in \mathcal{S}^{d-1}\times \R^d}\left\{\sum_{i=1}^{n} \mathbf{1}_{\left( \langle v, u_i - u \rangle = 0 \right)} \right\}.$$}  { Then, for every $\tau\in [0,1/2]$ with  ${\cal  C}_\pm^{(n)}(\tau)\neq \emptyset$, 
 $$ {\rm BP}({\cal  C}_\pm^{(n)}(\tau) )\in [\tau-N/n,\tau], \quad  \text{irrespective of the dataset $X_1, \dots, X_n$.  } $$ }
 \end{corollary}
 \begin{remark}
      {
The case of the median is a particular instance of \autoref{coro:median_contours_disc}. In particular, we have 
  $$ {\rm BP}(m_\pm^{(n)})\in \left[\tau^*-N/n,\tau^*\right], \quad \text{for } 
\tau^*:=\max_{u\in \R^d} {\rm TD}( u; u^{(n)}).  $$ 
}
 \end{remark}

\begin{corollary}\label{coro:median_contours_cont}
Take a reference measure $\mu\in \mathcal{P}^{a.c.}(\R^d)$ such that $\max_{x\in \R^d} {\rm TD}( x; \mu) =1/2$. Then, for any  $P\in \mathcal{P}^{a.c.}(\R^d)$, 
    $${\rm BP}({m
}_\pm({P}), P)=\frac{1}{2} $$ 
and
 $$ {\rm BP}({\cal  C}_\pm(\tau) ) =\tau \quad \text{holds for all $\tau\in (0, 1/2 )$} $$
\end{corollary}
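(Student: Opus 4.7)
I would deduce Corollary \ref{coro:median_contours_cont} directly from Theorem \ref{Theo:mainCont} by viewing the transport-based depth objects on $P$ as images of the corresponding Tukey-depth objects on $\mu$ under the canonical extension $\partial\varphi = \mathbb{Q}_\pm$. Writing $\mathcal{C}^{\mathrm{TD}}_\mu(\tau) := \{u : \mathrm{TD}(u;\mu) = \tau\}$, the definition $\mathrm{D}_\pm(x) = \mathrm{TD}(\mathbb{Q}_\pm^{-1}(x);\mu)$ together with $\mathbb{Q}_\pm = \partial\varphi$ yields $\mathcal{C}_\pm(\tau;P) = \partial\varphi(\mathcal{C}^{\mathrm{TD}}_\mu(\tau))$, and $m_\pm(P) = \partial\varphi(u^\star)$ for any Tukey median $u^\star$ of $\mu$ (such a $u^\star$ exists since, by hypothesis, $\max_u \mathrm{TD}(u;\mu) = 1/2$). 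Under a contamination $\nu\in\mathcal{Q}_\varepsilon(P)$ the reference $\mu$ and the Tukey contour $\mathcal{C}^{\mathrm{TD}}_\mu(\tau)$ are frozen, and only the transport map changes, so the breakdown of $\mathcal{C}_\pm(\tau;\nu)$ and of $m_\pm(\nu)$ is driven entirely by the breakdown of $\partial\varphi_{\mu\to\nu}$ on $\mathcal{C}^{\mathrm{TD}}_\mu(\tau)$ and on $u^\star$.

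\textbf{Median and upper bound for the contours.} Part (1) is immediate: applying Theorem \ref{Theo:mainCont} at $u^\star$ gives ${\rm BP}(m_\pm(P),P) = \mathrm{TD}(u^\star;\mu) = 1/2$. For the upper bound ${\rm BP}(\mathcal{C}_\pm(\tau))\leq\tau$ in Part (2), fix any $u\in\mathcal{C}^{\mathrm{TD}}_\mu(\tau)$; by Theorem \ref{Theo:mainCont}, for every $\varepsilon>\tau$ there exists $\nu\in\mathcal{Q}_\varepsilon(P)$ with $\inf_{v\in\partial\varphi_{\mu\to\nu}(u)}\|v\| = \infty$. Since $\partial\varphi_{\mu\to\nu}(u) \subseteq \mathcal{C}_\pm(\tau;\nu)$, this forces a point of $\mathcal{C}_\pm(\tau;\nu)$ to escape to infinity, and hence $d_H(\mathcal{C}_\pm(\tau;P),\mathcal{C}_\pm(\tau;\nu)) = \infty$.

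\textbf{Lower bound for the contours.} For the harder direction ${\rm BP}(\mathcal{C}_\pm(\tau))\geq\tau$, fix $\varepsilon<\tau$ and set $K := \mathcal{C}^{\mathrm{TD}}_\mu(\tau)$. This $K$ is a compact subset of $\{u : \mathrm{TD}(u;\mu) > \varepsilon\}\subseteq\mathrm{int}(\mathrm{dom}(\partial\varphi_{\mu\to\nu}))$ by Lemma \ref{Lemma:TukeyPositiveCont}, and Theorem \ref{Theo:mainCont} gives the pointwise bound $\sup_{\nu\in\mathcal{Q}_\varepsilon(P)}\inf_{v\in\partial\varphi_{\mu\to\nu}(u)}\|v\| < \infty$ for every $u\in K$. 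To promote this to a uniform bound over $K$, which is what controls the Hausdorff distance, I would argue by contradiction using maximal monotonicity. Suppose $u_n\in K$, $\nu_n\in\mathcal{Q}_\varepsilon(P)$, and $v_n\in\partial\varphi_{\mu\to\nu_n}(u_n)$ satisfy $\|v_n\|\to\infty$; extract $u_n\to u^\sharp\in K$, use continuity of $\mathrm{TD}(\cdot;\mu)$ for $\mu\in\mathcal{P}^{a.c.}(\R^d)$ to fix $\delta>0$ so small that the $2d$ auxiliary points $y_k^{\pm} = u^\sharp\pm\delta e_k$ still satisfy $\mathrm{TD}(y_k^{\pm};\mu) > \varepsilon$, pick $w_{k,n}^{\pm}\in\partial\varphi_{\mu\to\nu_n}(y_k^{\pm})$ with $\|w_{k,n}^{\pm}\|\leq M$ via Theorem \ref{Theo:mainCont}, and plug into maximal monotonicity $\langle v_n - w_{k,n}^{\pm},\, u_n - y_k^{\pm}\rangle\geq 0$. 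Dividing by $\|v_n\|$ and passing to the limit along a subsequence with $v_n/\|v_n\|\to \hat v$, the right-hand side stays bounded while $u_n - y_k^{\pm}\to \mp\delta e_k$; this forces $\langle \hat v, \mp e_k\rangle\geq 0$ for every sign and every $k$, i.e., $\hat v = 0$, contradicting $\|\hat v\|=1$.

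\textbf{Main obstacle.} The main technical point is precisely this uniform-in-$\nu$ boundedness of $\partial\varphi_{\mu\to\nu}(K)$ over the compact Tukey contour $K$. Theorem \ref{Theo:mainCont} supplies only a pointwise-in-$u$ bound, and the compactness-plus-maximal-monotonicity argument above (essentially a $2d$-point quantitative strengthening of Theorem \ref{Theo:mainCont}) is what closes the gap and ensures that $d_H(\mathcal{C}_\pm(\tau;P), \mathcal{C}_\pm(\tau;\nu))$ stays finite for all $\nu\in\mathcal{Q}_\varepsilon(P)$ whenever $\varepsilon<\tau$.
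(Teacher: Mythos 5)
The paper itself does not spell out a proof of Corollary~\ref{coro:median_contours_cont}; it only asserts that the breakdown points follow ``easily'' from \autoref{Theo:mainCont}. Your reconstruction is correct and, more to the point, supplies a genuine detail that the paper leaves implicit. Your identifications $\mathcal{C}_\pm(\tau;P)=\partial\varphi(\mathcal{C}^{\mathrm{TD}}_\mu(\tau))$ and $m_\pm(P)=\partial\varphi(u^\star)$ are the right ones (and are consistent with the paper's definition of the regions $\mathcal{R}_\pm(\tau)=\mathbf{Q}_\pm(\mathcal{R}^{\mathrm{TD}}_\mu(\tau))$), and the median claim and the upper bound for contours do follow directly from \autoref{Theo:mainCont}.

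Your diagnosis of the ``main obstacle'' is exactly right: the combination of \autoref{lemma:LowerCont2} and \autoref{Theo:mainCont} only bounds $\partial\varphi_{\mu\to\nu}(u)$ uniformly in $\nu\in\mathcal{Q}_\varepsilon(P)$ at a \emph{fixed} $u$, while controlling the Hausdorff distance requires a bound that is simultaneously uniform over the whole Tukey contour $K=\mathcal{C}^{\mathrm{TD}}_\mu(\tau)$. Your upgrade via compactness of $K$, continuity of $u\mapsto\mathrm{TD}(u;\mu)$ for $\mu\in\mathcal{P}^{a.c.}(\R^d)$, the $2d$ auxiliary probes $u^\sharp\pm\delta e_k$ inside the region $\{\mathrm{TD}>\varepsilon\}$, and the normalized monotonicity inequality $\langle v_n/\|v_n\| - w_{k,n}^\pm/\|v_n\|,\ u_n-y_k^\pm\rangle\ge 0$ is clean and closes the gap correctly: the limit forces $\langle\hat v,e_k\rangle=0$ for every $k$, contradicting $\|\hat v\|=1$. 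This is essentially the same geometric mechanism (monotonicity plus a cone/half-space limit) that drives the proofs of \autoref{lemma:LowerCont} and \autoref{lemma:LowerCont2}, so your argument is very much in the spirit of the paper while filling in the uniformity step the authors skip.

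Two very small points worth making explicit in a final write-up. First, the existence of a Tukey median $u^\star$ with $\mathrm{TD}(u^\star;\mu)=1/2$ uses that $\mathrm{TD}(\cdot;\mu)$ is upper semicontinuous and vanishes at infinity, so the supremum $1/2$ is attained. Second, for the lower bound you should also note that $\mathcal{C}_\pm(\tau;\nu)=\partial\varphi_{\mu\to\nu}(K)$ is nonempty (this follows from \autoref{Lemma:TukeyPositiveCont} since $K\subset\{\mathrm{TD}>0\}$), because Hausdorff distance to an empty set is degenerate; combined with your uniform $R$-boundedness this bounds both one-sided Hausdorff terms. Neither point changes the substance of your argument.
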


\begin{remark}
    Note that \autoref{coro:median_contours_disc} does not follow directly from \autoref{Theo:mainDiscrete2}.
     This is  because we defined $m_\pm^{(n)}$ and ${\cal  C}_\pm^{(n)}(\tau) $ using maximally monotone extensions (beyond the sample points) while \autoref{Theo:mainDiscrete2} focuses only on the behavior at the sample points.  { In fact, we will prove both results simultaneously in \autoref{Sect:proofDisc}.} 
\end{remark}
\begin{remark}\label{rem:reference_measure2}From \autoref{coro:median_contours_disc} and \autoref{coro:median_contours_cont} we conclude that the reference measure does not have any influence on the population breakdown point  as long as it attains the maximal Tukey depth of $1/2$. However, in the discrete case it might have an small impact as illustrated in  \autoref{Remark:ArbitralyBad}.
\end{remark}

\section{Proof of main results}\label{Sect:proofs}
This section is devoted to the proofs of \autoref{Theo:mainDiscrete2} and \autoref{Theo:mainCont}. Both arguments follow the same idea, which is based on separating a portion of the data into a ball escaping to the horizon  in the direction given by $v_{u}\in\argmin_{v\in \mathcal{S}^{d-1}} \mu\left( \{z: \langle v, z - u \rangle \geq 0\} \right) $. We will argue that a claimed breakdown point $\varepsilon^*$ holds by contradiction. We will show that if $\varepsilon^*$ is not the breakdown, then the transport-based quantile function (or its empirical counterpart) would have to transform a convex cone into a non-convex cone that violates the mass-preserving property of transport maps. More precisely, we show that  any monotone set-valued mapping $T:\R^d\to 2^{\R^d}$  transforms the convex cone
\begin{equation}
    \label{eq:convexcone}
    \mathcal{C}_{x,e, \theta} =\{ x+v\in \R^d: \ \langle v, e\rangle \geq \cos(\theta) \|v\|\} \quad  {{\rm for}\  x\in \R^d, \ e\in \mathcal{S}^{d-1}\ {\rm and}\  \theta\in \left[-\frac{\pi}{2}, \frac{\pi}{2}\right],} 
\end{equation}
 {into the cone $ \mathcal{A}_{y^x,e, \theta} $ for any $y^x\in T(x)$, where }
\begin{equation}
    \label{eq:cone-def-nonconvex}
    \mathcal{A}_{u,e, \theta} =\left\{y\in \R^d: \langle u-y, e\rangle\leq 
 \| u-y\|  {|\sin(\theta)|}   \right\}. 
\end{equation}
We derive a contradiction by showing that if $T$ pushes forward  $\mu$
to $P$ (or its empirical measure) and $\varepsilon^*$ is not the breakdown, then the $P$-mass of  $\mathcal{C}_{x,e, \theta}$ is larger than the $\mu$-mass of $ \mathcal{A}_{u,e, \theta}$.  {We will show that $T(\mathcal{C}_{x,e, \theta})\subset \mathcal{A}_{u,e, \theta}$, which will then contradict the mass-preserving property of the push forward $T$.}
\begin{figure}[ht!]
    \centering
    \resizebox{0.99\textwidth}{!}{ 

    \tikzset{every picture/.style={line width=0.75pt}} 

\tikzset{every picture/.style={line width=0.75pt}} 

\tikzset{every picture/.style={line width=0.75pt}} 

\begin{tikzpicture}[x=0.75pt,y=0.75pt,yscale=-1,xscale=1]

\draw   (293.8,131.2) .. controls (293.8,120.15) and (302.98,111.2) .. (314.3,111.2) .. controls (325.62,111.2) and (334.8,120.15) .. (334.8,131.2) .. controls (334.8,142.25) and (325.62,151.2) .. (314.3,151.2) .. controls (302.98,151.2) and (293.8,142.25) .. (293.8,131.2) -- cycle ;
\draw   (339.6,128.45) -- (357,128.45) -- (357,124.2) -- (368.6,132.7) -- (357,141.2) -- (357,136.95) -- (339.6,136.95) -- cycle ;
\draw [fill={rgb, 255:red, 194; green, 28; blue, 28 }  ,fill opacity=1 ][line width=1.5]  [dash pattern={on 1.69pt off 2.76pt}]  (147.8,139.2) -- (370.8,100.2) ;
\draw [line width=1.5]  [dash pattern={on 1.69pt off 2.76pt}]  (147.8,139.2) -- (370.8,156.2) ;
\draw   (34.8,139.2) .. controls (34.8,108) and (60.1,82.7) .. (91.3,82.7) .. controls (122.5,82.7) and (147.8,108) .. (147.8,139.2) .. controls (147.8,170.4) and (122.5,195.7) .. (91.3,195.7) .. controls (60.1,195.7) and (34.8,170.4) .. (34.8,139.2) -- cycle ;
\draw    (91.3,139.2) -- (39.76,149.8) ;
\draw [shift={(37.8,150.2)}, rotate = 348.38] [color={rgb, 255:red, 0; green, 0; blue, 0 }  ][line width=0.75]    (10.93,-3.29) .. controls (6.95,-1.4) and (3.31,-0.3) .. (0,0) .. controls (3.31,0.3) and (6.95,1.4) .. (10.93,3.29)   ;
\draw  [draw opacity=0][dash pattern={on 1.69pt off 2.76pt}][line width=1.5]  (226.19,144.2) .. controls (226.2,143.76) and (226.2,143.33) .. (226.2,142.9) .. controls (226.2,137.11) and (225.73,131.43) .. (224.84,125.91) -- (136.8,142.9) -- cycle ; \draw  [dash pattern={on 1.69pt off 2.76pt}][line width=1.5]  (226.19,144.2) .. controls (226.2,143.76) and (226.2,143.33) .. (226.2,142.9) .. controls (226.2,137.11) and (225.73,131.43) .. (224.84,125.91) ;  
\draw   (420.6,128) .. controls (420.6,74.54) and (463.94,31.2) .. (517.4,31.2) .. controls (570.86,31.2) and (614.2,74.54) .. (614.2,128) .. controls (614.2,181.46) and (570.86,224.8) .. (517.4,224.8) .. controls (463.94,224.8) and (420.6,181.46) .. (420.6,128) -- cycle ;
\draw [color={rgb, 255:red, 0; green, 0; blue, 0 }  ,draw opacity=1 ]   (147.8,139.2) .. controls (150.18,80.79) and (451.29,106.08) .. (517.47,130.85) ;
\draw [shift={(519.4,131.6)}, rotate = 201.96] [fill={rgb, 255:red, 0; green, 0; blue, 0 }  ,fill opacity=1 ][line width=0.08]  [draw opacity=0] (8.93,-4.29) -- (0,0) -- (8.93,4.29) -- cycle    ;
\draw [fill={rgb, 255:red, 194; green, 28; blue, 28 }  ,fill opacity=1 ][line width=1.5]  [dash pattern={on 1.69pt off 2.76pt}]  (479.8,239.4) -- (519.4,131.6) ;
\draw [fill={rgb, 255:red, 194; green, 28; blue, 28 }  ,fill opacity=1 ][line width=1.5]  [dash pattern={on 1.69pt off 2.76pt}]  (519.4,131.6) -- (477.4,18.6) ;
\draw   (494.8,211.2) -- (509.92,211.2) -- (509.92,209) -- (520,213.4) -- (509.92,217.8) -- (509.92,215.6) -- (494.8,215.6) -- cycle ;
\draw   (492.4,46.05) -- (507.16,46.05) -- (507.16,44) -- (517,48.1) -- (507.16,52.2) -- (507.16,50.15) -- (492.4,50.15) -- cycle ;
\draw [color={rgb, 255:red, 0; green, 0; blue, 0 }  ,draw opacity=1 ]   (312.6,141.8) .. controls (333.09,199.32) and (538.79,227.74) .. (567.51,178.31) ;
\draw [shift={(568.7,176)}, rotate = 114.16] [fill={rgb, 255:red, 0; green, 0; blue, 0 }  ,fill opacity=1 ][line width=0.08]  [draw opacity=0] (8.93,-4.29) -- (0,0) -- (8.93,4.29) -- cycle    ;
\draw [line width=1.5]  [dash pattern={on 1.69pt off 2.76pt}]  (519,14.6) -- (523,245) ;
\draw  [draw opacity=0] (500.04,85.16) .. controls (505.32,82.49) and (511.2,81) .. (517.4,81) .. controls (517.66,81) and (517.92,81) .. (518.18,81.01) -- (517.4,125.85) -- cycle ; \draw   (500.04,85.16) .. controls (505.32,82.49) and (511.2,81) .. (517.4,81) .. controls (517.66,81) and (517.92,81) .. (518.18,81.01) ;  
\draw  [draw opacity=0] (521.03,175.81) .. controls (515.15,176.41) and (509.12,175.69) .. (503.33,173.47) .. controls (503.09,173.38) and (502.85,173.28) .. (502.61,173.19) -- (519.4,131.6) -- cycle ; \draw   (521.03,175.81) .. controls (515.15,176.41) and (509.12,175.69) .. (503.33,173.47) .. controls (503.09,173.38) and (502.85,173.28) .. (502.61,173.19) ;  

\draw (301.8,123.2) node [anchor=north west][inner sep=0.75pt]   [align=left] {$ $$\displaystyle Q_{k}$$ $};
\draw (126.8,127.6) node [anchor=north west][inner sep=0.75pt]    {$x_{k}$};
\draw (231.6,126.8) node [anchor=north west][inner sep=0.75pt]    {$\theta _{k}$};
\draw (59.6,150) node [anchor=north west][inner sep=0.75pt]    {$R$};
\draw (86.8,122) node [anchor=north west][inner sep=0.75pt]    {$0$};
\draw (524.4,122) node [anchor=north west][inner sep=0.75pt]    {$u$};
\draw (224.4,82.8) node [anchor=north west][inner sep=0.75pt]    {$(\partial \varphi_{\eps,k})^{-1}$};
\draw (501.2,61.2) node [anchor=north west][inner sep=0.75pt]    {$\theta _{k}$};
\draw (502.8,181.2) node [anchor=north west][inner sep=0.75pt]    {$\theta _{k}$};
\draw (328.4,190.8) node [anchor=north west][inner sep=0.75pt]    {$(\partial \varphi_{\eps,k})^{-1}$};
\draw (502,4.4) node [anchor=north west][inner sep=0.75pt]    {$H$};
\draw (461,11.4) node [anchor=north west][inner sep=0.75pt]    {$r_{1}$};
\draw (463,222.4) node [anchor=north west][inner sep=0.75pt]    {$r_{2}$};

\end{tikzpicture}}
    \caption{Diagram illustrating the strategy for the proof of \autoref{Theo:mainCont}.
    For the upper bound, we consider the mixture \((1-\eps)P + \eps Q_k\), where \(Q_k\) is uniformly distributed on the ball $k v_u+\mathbb{B}$ for  $v_u\in\argmin_{v\in \mathcal{S}^{d-1}} \mu\left( \{z: \langle v, z - u \rangle \geq 0\} \right) $. If \(\eps > {\rm TD}(u;\mu)\) and a sequence \(\{x_k\}_{k \in \mathbb{N}}\)  {with} $x_k\in \partial \varphi_{\eps,k}(u):=\partial \varphi_{\mu\to ( 1-\eps ) P+\eps Q_{k}}(u)$ remains bounded within a ball of radius \(R\), then for sufficiently large \(k\), the map \((\partial \varphi_{\eps,k})^{-1}\) transforms the cone with vertex \(x_k\) and angle \(\theta_k\) into a set located to the right of the rays \(r_1\) and \(r_2\). As the ball escapes to the horizon, the angle \(\theta_k\) tends to \(0\), causing the image of the ball to be contained within the left halfspace defined by the hyperplane \(H\), which is tangent to $v_u$. Consequently, as \(\eps > {\rm TD}(u;\mu)\), the \(((1-\eps)P+\eps Q_k)\)-measure of the cone exceeds \({\rm TD}(u;\mu)\), while its image under \((\partial \varphi_{\eps,k})^{-1}\) converges to a set with \(\mu\)-probability of \({\rm TD}(u;\mu)\). This leads to a contradiction. 
    The strategy for the lower bound follows a similar approach.}
    \label{fig:strategy}
\end{figure}

The proof idea of the contradiction is illustrated in \autoref{fig:strategy}.   \autoref{lemma:conetocone} formalizes that a monotone set-valued map $T$ transforms $\mathcal{C}_{x,e, \theta}$ into $ \mathcal{A}_{u,e, \theta}$. Interested readers may note that similar results are commonly employed to show that a transport map between probability measures $\mu_1$ and $\mu_2$, where $\mu_2$
  is supported on a strongly convex set, cannot map interior points of  $\supp(\mu_1)$ to boundary points of $\supp(\mu_2)$ (cf. \cite{CorderoErausquinFigalli.2019,delBarrioGonzalezHallin.2020}). 
\begin{lemma}\label{lemma:conetocone}
     Let $T:{\rm dom}(T)\subset \R^d\to 2^{\R^d}$ be a monotone operator, i.e., 
     \begin{equation}
         \label{monoton}
         \langle x_1-x_2, y_1-y_2\rangle\geq 0, \quad \text{for all } x_1,x_2\in {\rm dom}(T),\ y_1\in T(x_1) \ \text{and}\  y_2\in T(x_2) .
     \end{equation}
    Set $ x\in  {\rm dom}(T) $ and  {recall the definitions \eqref{eq:convexcone} and \eqref{eq:cone-def-nonconvex}.} Then, for any  $\theta\in (0, \pi/2)$ and any $e\in \mathcal{S}^{d-1}$, it holds that $$T(\mathcal{C}_{x,e, \theta}):=\bigcup_{z\in\mathcal{C}_{x,e, \theta} } T(z)\subset \mathcal{A}_{y^x,e, \theta},$$
    for all $y^x\in T(x)$. 
 \end{lemma}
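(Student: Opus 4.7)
The plan is to unpack the two definitions and derive the containment from a single application of \eqref{monoton} combined with a spherical triangle inequality. Concretely: for any $z\in\mathcal{C}_{x,e,\theta}$ and $y\in T(z)$, set $v:=z-x$ and $w:=y^x-y$. The defining condition of $\mathcal{C}_{x,e,\theta}$ then reads $\langle v,e\rangle\geq \cos(\theta)\|v\|$, and applying monotonicity \eqref{monoton} to the pairs $(x,y^x)$ and $(z,y)$ produces $\langle v,y-y^x\rangle\geq 0$, i.e.\ $\langle v,w\rangle\leq 0$. The goal reduces to deducing $\langle w,e\rangle \leq \|w\|\cdot|\sin\theta|/\sqrt{1-\sin^2\theta}$, which is exactly the condition $y\in\mathcal{A}_{y^x,e,\theta}$.

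Assuming $v\neq 0$ and $w\neq 0$ (the remaining cases are trivial: $w=0$ gives $y=y^x\in\mathcal{A}_{y^x,e,\theta}$ immediately, and $v=0$ forces $z=x$), I would write $\angle(\cdot,\cdot)\in[0,\pi]$ for the angle between two nonzero vectors and reinterpret the two facts as $\angle(v,e)\leq \theta$ and $\angle(v,w)\geq \pi/2$. The spherical triangle inequality on $\mathcal{S}^{d-1}$ then yields
\[
\angle(w,e) \;\geq\; \angle(v,w) - \angle(v,e) \;\geq\; \tfrac{\pi}{2} - \theta.
\]
Because $\tfrac{\pi}{2}-\theta\in(0,\pi/2)$ and cosine is decreasing on $[0,\pi]$, it will follow that
\[
\frac{\langle w,e\rangle}{\|w\|} \;=\; \cos\bigl(\angle(w,e)\bigr) \;\leq\; \cos\bigl(\tfrac{\pi}{2}-\theta\bigr) \;=\; \sin\theta \;\leq\; \frac{|\sin\theta|}{\sqrt{1-\sin^2\theta}},
\]
where the final inequality uses $\cos\theta\in(0,1]$. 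Rewriting gives $\langle y^x-y,e\rangle\leq \|y^x-y\|\cdot|\sin\theta|/\sqrt{1-\sin^2\theta}$, which is precisely $y\in\mathcal{A}_{y^x,e,\theta}$.

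The only real obstacle is the spherical triangle inequality step: signs and orientations have to be tracked carefully so that the difference of angles on the right-hand side goes the correct way (using $\langle v,w\rangle\leq 0 \Rightarrow \angle(v,w)\geq \pi/2$, rather than $\leq \pi/2$, and $\angle(v,e)\leq \theta$ in the direction that makes subtraction useful). Every other step is either a definitional unfolding or a one-line trigonometric manipulation. As a by-product, the argument actually produces the stronger bound $\langle y^x-y,e\rangle\leq \sin(\theta)\|y^x-y\|$; the weaker $\tan\theta$ formulation in the statement is presumably chosen because that is the shape of the error terms needed in the proofs of \autoref{Theo:mainDiscrete2} and \autoref{Theo:mainCont}.
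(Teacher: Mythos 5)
Your proof is correct and takes a genuinely different route from the paper's. The paper fixes an orthonormal basis whose first vector is $e$, turns the cone condition into the bound $\|(x-z)-(x_1-z_1)e\|\leq \tan\theta\,(z_1-x_1)$ on the component of $x-z$ orthogonal to $e$, and then combines monotonicity with Cauchy--Schwarz applied to the decomposition $\langle y^x-y^z,\,x-z\rangle=\langle y^x-y^z,\,(x-z)-(x_1-z_1)e\rangle+(x_1-z_1)\langle y^x-y^z,e\rangle$. You instead repackage cone membership and monotonicity as the angle bounds $\angle(v,e)\leq\theta$ and $\angle(v,w)\geq\pi/2$ and invoke the triangle inequality for the angular metric on $\mathcal{S}^{d-1}$, which gives $\angle(w,e)\geq\pi/2-\theta$ in one line. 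Your route is shorter once the angular triangle inequality is granted (itself a fact of roughly the same weight as the paper's Cauchy--Schwarz step), and it delivers the strictly tighter constant $\sin\theta$ in place of $\tan\theta$. That improvement is not, as you speculate, a deliberate weakening needed elsewhere: it is slack in the paper's Cauchy--Schwarz step, which bounds $\langle y^x-y^z,\,(x-z)-(x_1-z_1)e\rangle$ by $\|y^x-y^z\|\cdot\|(x-z)-(x_1-z_1)e\|$ rather than by the norm of the $e$-orthogonal projection of $y^x-y^z$ times $\|(x-z)-(x_1-z_1)e\|$; the tighter choice would recover $\sin\theta$ as well. One caveat you share with the paper's own proof: when $z=x$ (so $v=0$) neither argument yields any constraint, and if $T(x)$ is multi-valued the claimed inclusion $T(x)\subset\mathcal{A}_{y^x,e,\theta}$ need not hold pointwise; this is harmless in the measure-theoretic applications of the lemma, but it is not quite the ``trivial'' case your parenthetical suggests.
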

 \begin{proof}
Firstly, we construct an  orthonormal basis $\{e_i\}_{i=1}^d$ with $e_1=e$. For a vector $\mathtt{v}\in\mathbb{R}^d$ we denote as $(\mathtt{v}_1, \dots, \mathtt{v}_d)$ its coordinates  with respect to  the basis $\{e_i\}_{i=1}^d$.     Fix $z\in \mathcal{C}_{x,e_1, \theta} $. By definition of the cone,   $z_1-x_1\geq \cos(\theta) \|z-x\|$.      {Hence, by Pythagoras' theorem 
$$\|x-z\|^2=\|(x-z)-(x_1-z_1)e_1\|^2+ (x_1-z_1)^2 \geq \|(x-z)-(x_1-z_1)e_1\|^2+ \cos(\theta)^2 \|z-x\|^2.  $$
Therefore, we derive the bound  
\begin{align}\label{eq:boundSeno}
   \|(x-z)-(x_1-z_1)e_1\|\leq  {|\sin(\theta)|}  (z_1-x_1).
\end{align}}
Fix $y^z\in T(z)$ and $y^x\in T(x)$. Then, by \eqref{monoton}, it holds that $0\leq \langle y^x-y^z, x-z\rangle$,   so that 
$$ 0\leq \langle y^x-y^z, x-z\rangle =  \langle y^x-y^z, x-z-(x_1-z_1)e_1\rangle+(x_1-z_1)\langle y^x-y^z, e_1\rangle.$$
Rearranging the terms and using  Cauchy–Schwarz inequality we get 
\begin{align*}
    (z_1-x_1)\langle y^x-y^z, e_1\rangle&\leq   \langle y^x-y^z, x-z-(x_1-z_1)e_1\rangle\\
    &\leq \| y^x-y^z\| \|x-z-(x_1-z_1)e_1\|.
\end{align*}
By \eqref{eq:boundSeno}, it holds that 
\begin{equation*}
    (z_1-x_1)\langle y^x-y^z, e_1\rangle\leq \| y^x-y^z\|  {|\sin(\theta)|}  (z_1-x_1) {.}
\end{equation*}
Since $z_1-x_1>0$,  {we can divide both sides by $z_1-x_1$} and the result follows.
 \end{proof}

\subsection{Population transport map: proof of \autoref{Theo:mainCont}}\label{Sect:proofCont}
We start with a straightforward observation, which follows directly from  {  (Fatou’s version of) the continuity of the probability measure (see e.g.,~\cite[Theorem 4.1]{Billingsley.1995}).}
\begin{lemma}\label{lemma:ConvergenceToOneHalph}
  Set $\mu\in \mathcal{P}(\R^d)$.  Let $\{\theta_k\}_{k\in \N}\subset (0,\pi/2)$ and $\{v_k\}_{k\in \N}\subset \mathbb{R}^d$ be such that  $ \theta_k\to 0$ and $ v_k\to v$, respectively, as $k\to \infty$. Then
$$ \limsup_{k\to \infty} \mu(\mathcal{A}_{u,v_k, \theta_k}) \leq \mu(\{ y: 
      \langle  y-u, v\rangle\geq 
0  \} ), $$
 {where $\mathcal{A}_{u,v_k, \theta_k}$ is defined in \eqref{eq:cone-def-nonconvex}.}
\end{lemma}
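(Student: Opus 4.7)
The plan is to combine a reverse Fatou argument for indicator functions with an elementary set-theoretic inclusion. Since $\mu$ is a probability measure and each $\mathbf{1}_{\mathcal{A}_{u,v_k,\theta_k}} \le 1$, the reverse Fatou lemma gives
\[
\limsup_{k\to\infty}\mu(\mathcal{A}_{u,v_k,\theta_k}) \;\le\; \mu\!\left(\limsup_{k\to\infty}\mathcal{A}_{u,v_k,\theta_k}\right),
\]
where the set-theoretic limsup is $\bigcap_{n\ge 1}\bigcup_{k\ge n}\mathcal{A}_{u,v_k,\theta_k}$. This is precisely the content one would quote from \cite[Theorem 4.1]{Billingsley.1995}. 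It therefore suffices to prove the inclusion
\[
\limsup_{k\to\infty}\mathcal{A}_{u,v_k,\theta_k} \;\subset\; H:=\{y\in\R^d:\langle y-u,v\rangle\ge 0\}.
\]

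To establish this, I would pick $y\in\limsup_k\mathcal{A}_{u,v_k,\theta_k}$, extract a subsequence $\{k_j\}_{j\in\N}$ such that $y\in\mathcal{A}_{u,v_{k_j},\theta_{k_j}}$ for every $j$, and unfold the definition to get
\[
\langle u-y, v_{k_j}\rangle \;\le\; \|u-y\|\,\frac{|\sin(\theta_{k_j})|}{\bigl(1-\sin(\theta_{k_j})^2\bigr)^{1/2}}.
\]
Since $\theta_{k_j}\to 0^+$, the right-hand side tends to $0$, while $v_{k_j}\to v$ forces the left-hand side to converge to $\langle u-y,v\rangle$ (by continuity of the inner product, using that $\|u-y\|$ is a fixed finite constant). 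Passing to the limit yields $\langle u-y,v\rangle\le 0$, equivalently $y\in H$. Combining with the reverse Fatou inequality finishes the proof.

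I do not anticipate any real obstacle: the only mild care required is to justify the reverse Fatou step (which needs the finite majorant $1$, supplied by $\mu$ being a probability measure) and to note that the case $y=u$ is trivially in $H$ and causes no issue in the limit (both sides of the defining inequality vanish). Since no regularity on $\mu$ is invoked, the argument works for any $\mu\in\mathcal{P}(\R^d)$ as stated.
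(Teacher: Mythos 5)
Your argument is correct and is essentially the same route the paper takes: the paper simply cites Billingsley's Theorem~4.1 (the reverse Fatou inequality $\limsup_k \mu(A_k)\le\mu(\limsup_k A_k)$, valid since $\mu$ is a probability measure), and your set-theoretic inclusion $\limsup_k \mathcal{A}_{u,v_k,\theta_k}\subset\{y:\langle y-u,v\rangle\ge 0\}$ is exactly the elementary verification that makes that citation apply. Your pointwise argument (pass to a subsequence, use $\tan\theta_{k_j}\to 0$ and $v_{k_j}\to v$) is sound, and you are right that the $y=u$ case is trivial.
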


We will prove that ${\rm BP}(\partial \varphi(u), P)=\mathrm{TD}(u; \mu)$ by showing matching  upper and lower bounds separately.  Recall that for a probability measure $\nu$, the Borel map $\nabla  \varphi_{\mu\to \nu}$ is the unique gradient of a convex function pushing $\mu$ forward to $\nu$.  For the case $\nu=P$, we just write $\nabla  \varphi=\nabla  \varphi_{\mu\to P}$.
\autoref{Lemma:TukeyPositiveCont} ensures that  a point  with (strictly) positive Tukey depth has always non-empty subdifferential.   The following lemma shows the upper bound. 
\begin{lemma}\label{lemma:LowerCont}
Let $\mu,P\in \mathcal{P}^{a.c.}(\R^d)$ and $Q_k$ be the Lebesgue uniform distribution on the ball $k v_u+\mathbb{B}$, $k\in \N$, for some 
 $$v_u\in \argmin_{v\in \mathcal{S}^{d-1}} \mu(\{ y: 
      \langle  y-u, v\rangle\geq 
0  \} ).$$
Then, for every $\eps>{\rm TD}(u;\mu)>0$, the sequence $\{\partial \varphi_{\eps,k}(u)\}_{k\in \N}$, with  
 $\partial \varphi_{\eps,k} $  being any maximal {  cyclical} monotone extension of $\nabla \varphi_{\mu\to P+\eps (Q_k-P)}$, escapes to the horizon as $k\to +\infty$, i.e.,   
    $$\inf_{y\in \partial \varphi_{\eps,k}(u)}\|y\|\to +\infty.$$
\end{lemma}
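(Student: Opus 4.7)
I would argue by contradiction. Suppose that $\inf_{y\in \partial \varphi_{\eps,k}(u)}\|y\|$ does not tend to $+\infty$. Since $\mathrm{TD}(u;\mu)>0$, \autoref{Lemma:TukeyPositiveCont} ensures that $\partial \varphi_{\eps,k}(u)\neq \emptyset$ for every $k$, and since this set is closed and convex the infimum is attained. Passing to a subsequence (still indexed by $k$), I may assume that there exist $x_k\in \partial \varphi_{\eps,k}(u)$ with $\|x_k\|\leq R$ for some finite $R$. Set $P_{\eps,k}:=(1-\eps)P+\eps Q_k$, which still belongs to $\mathcal{P}^{a.c.}(\R^d)$, and let $\varphi^*_{\eps,k}$ denote a convex conjugate of $\varphi_{\eps,k}$, so that $\partial \varphi^*_{\eps,k}=(\partial \varphi_{\eps,k})^{-1}$ is the maximal monotone inverse whose a.e.\ single-valued selection $\nabla \varphi^*_{\eps,k}$ pushes $P_{\eps,k}$ forward to $\mu$. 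The inclusion $x_k\in \partial \varphi_{\eps,k}(u)$ is equivalent to $u\in \partial \varphi^*_{\eps,k}(x_k)$, which is what I need in order to invoke the cone lemma at the base point $x_k$.

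The next step is to exploit that $k v_u+\mathbb{B}$ is escaping to infinity in direction $v_u$ while $x_k$ is bounded, so that this ball fits into a cone with vertex $x_k$, axis $v_u$, and a vanishingly small half-angle. Concretely, for any $y=kv_u+w$ with $\|w\|\leq 1$, decomposing $y-x_k$ along $v_u$ and its orthogonal complement shows that the angle between $y-x_k$ and $v_u$ is at most $\arctan\!\bigl((1+R)/(k-1-R)\bigr)$. Hence I can choose $\theta_k\to 0^+$ with $k v_u+\mathbb{B}\subset \mathcal{C}_{x_k,v_u,\theta_k}$ for all large $k$. Applying \autoref{lemma:conetocone} to the monotone operator $T=\partial \varphi^*_{\eps,k}$ at the point $x_k$ with $u\in T(x_k)$ yields
\[
\partial \varphi^*_{\eps,k}\bigl(\mathcal{C}_{x_k,v_u,\theta_k}\bigr)\subset \mathcal{A}_{u,v_u,\theta_k}.
\]
Since $\nabla \varphi^*_{\eps,k}$ is defined $P_{\eps,k}$-almost everywhere, its values on the differentiability points of $\mathcal{C}_{x_k,v_u,\theta_k}$ lie in $\mathcal{A}_{u,v_u,\theta_k}$, and the pushforward identity gives
\[
\eps=\eps Q_k(k v_u+\mathbb{B})\leq P_{\eps,k}\bigl(\mathcal{C}_{x_k,v_u,\theta_k}\bigr) \leq \mu\bigl(\mathcal{A}_{u,v_u,\theta_k}\bigr).
\]
Letting $k\to \infty$, \autoref{lemma:ConvergenceToOneHalph} together with the defining property of $v_u$ yields $\limsup_k \mu(\mathcal{A}_{u,v_u,\theta_k})\leq \mu(\{y:\langle y-u,v_u\rangle \geq 0\})=\mathrm{TD}(u;\mu)$, so $\eps\leq \mathrm{TD}(u;\mu)$, contradicting the hypothesis $\eps>\mathrm{TD}(u;\mu)$.

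\textbf{Main obstacle.} The delicate part is the mass-preservation inequality $P_{\eps,k}(\mathcal{C}_{x_k,v_u,\theta_k})\leq \mu(\mathcal{A}_{u,v_u,\theta_k})$, which I need in order to combine the purely set-theoretic conclusion of \autoref{lemma:conetocone} with the quantitative bound on $\mu$. This requires care because $\partial \varphi^*_{\eps,k}$ is set-valued, whereas the pushforward identity is really a statement about the a.e.\ gradient $\nabla \varphi^*_{\eps,k}$. The resolution is that $P_{\eps,k}$ is absolutely continuous, so the set of non-differentiability of $\varphi^*_{\eps,k}$ is $P_{\eps,k}$-null, and on the full-measure subset of differentiability the inclusion $\nabla \varphi^*_{\eps,k}(z)\in \partial \varphi^*_{\eps,k}(z)\subset \mathcal{A}_{u,v_u,\theta_k}$ holds; the standard fact $P_{\eps,k}(A)\leq \mu(\nabla \varphi^*_{\eps,k}(A))$ for Borel $A$ then closes the loop. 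The second mildly delicate point is that the choice of $\theta_k\to 0$ is only possible because $\{x_k\}$ is bounded, which is exactly where the contradiction hypothesis enters.
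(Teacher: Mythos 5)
Your proposal is correct and follows essentially the same strategy as the paper's proof: argue by contradiction, trap the escaping ball $kv_u+\mathbb{B}$ in a cone $\mathcal{C}_{x_k,\cdot,\theta_k}$ with vertex $x_k$ and vanishing half-angle, apply \autoref{lemma:conetocone} to $\partial\varphi^*_{\eps,k}$, use the pushforward identity to transfer the $P_{\eps,k}$-mass lower bound $\eps$ into a $\mu$-mass lower bound on $\mathcal{A}_{u,\cdot,\theta_k}$, and conclude via \autoref{lemma:ConvergenceToOneHalph} and the defining property of $v_u$. The only cosmetic difference is in the choice of cone: you fix the axis to be $v_u$ and take a slightly larger half-angle $\theta_k=\arctan\bigl((1+R)/(k-1-R)\bigr)$ guaranteeing containment of the ball, whereas the paper takes the exact tangent cone from $x_k$ to the ball (with axis $e_k$ that itself converges to $v_u$); both choices are equivalent, and yours marginally simplifies matters by dispensing with the convergence $e_k\to v_u$.
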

\begin{proof}
Note first that as ${\rm TD}(u;\mu)>0$, \autoref{Lemma:TukeyPositiveCont} implies that $ \partial\varphi_{\eps,k}(u) \neq \emptyset$ for all $k\in \N$ and  any maximal {  cyclical} monotone extension of $\nabla \varphi_{\mu\to P+\eps (Q_k-P)}$. 
We will show that for any sequence $
\{x_k\}_{k\in \N}$, with 
$ x_k \in \partial\varphi_{\eps,k}(u)$
for all $k\in \N$, it must hold that $ \|x_k\| \to \infty.$ 
To prove the claim we argue by contradiction. To do so, we can assume, after taking subsequences, that $x_k$ belongs to some ball $R\,\mathbb{B}$ for all $k\in \N$ and some finite $R>0$. 
Assume that $k\geq  2 (R+1) $ and let 
$ \mathcal{D}_k $ be the set of all rays arising from $ x_k $ and  {intersecting} $k v_u+\mathbb{B}$. Note that for all $k$ large enough there exists $e_k\in \mathcal{S}^{d-1}$ and $\theta_k\in (0,\pi/2)$ such that  $ {{\rm cl}}(\mathcal{D}_k) =\mathcal{C}_{x_k,e_k,\theta_k}$ as defined  in \eqref{eq:convexcone}.

On the one hand, it holds that  
\begin{equation}
    \label{eq:ToContradictlowerbound1}
    (P+\eps (Q_{k}-P))(\mathcal{C}_{x_k,e_k,\theta_k})\geq  \eps Q_{k}(\mathcal{C}_{x_k,e_k,\theta_k})= \eps  {\geq } {\rm TD}(u;\mu)+\alpha
\end{equation}
for some $\alpha>0$. On the other hand, \autoref{lemma:conetocone}  and \eqref{MonotoneQuant} imply that 
$$ (\partial \varphi_{\eps,k})^{-1}(\mathcal{C}_{x_k,e_k,\theta_k})\subset \mathcal{A}_{u,e_k,\theta_k}, $$
which yields
\begin{equation}
    \label{eq:ToContradictlowerbound2}
    \mu((\partial \varphi_{\eps,k})^{-1}(\mathcal{C}_{x_k,e_k,\theta_k})) {\leq } \mu(\mathcal{A}_{u,e_k,\theta_k}).
\end{equation}

Note that $\theta_k\to 0  $ and $e_k\to v_u$, due to $\|x_k\|\leq R$ and the fact that $k v_u+\mathbb{B}$ escapes to the horizon.   The contradiction will be found by showing that as $k\to \infty$, the optimal transport map $\nabla \varphi_{\eps,k}^*=(\nabla \varphi_{\eps,k})^{-1}$ violates the push forward condition. 
Since  $\partial  \varphi_{\eps,k}^*= (\partial  \varphi_{\eps,k})^{-1}= \{\nabla  \varphi_{\eps,k}^*(x)\}$ for $(P+\eps (Q_{k}-P))$-a.e.\ $x$ and the Borel map $\nabla  \varphi_{\eps,k}^*$ pushes $P+\eps (Q_{k}-P)$ forward to $\mu$, it must hold that 
$$ \mu(\partial  \varphi_{\eps,k}^*(\mathcal{C}_{x_k,e_k,\theta_n}))= (P+\eps (Q_{k}-P))(\mathcal{C}_{x_k,e_k,\theta_k}).$$
We use \eqref{eq:ToContradictlowerbound1} and \eqref{eq:ToContradictlowerbound2} to get 
$ \mu(\mathcal{A}_{u,e_k,\theta_k}) {\geq } {\rm TD}(u;\mu)+\alpha.$ Since $\theta_k\to 0$ and $e_k\to v_u$, \autoref{lemma:ConvergenceToOneHalph} leads to the contradiction
\begin{align*}
      {\rm TD}(u;\mu)&=\min_{v\in \mathcal{S}^{d-1}}\mu(\{ y: 
      \langle  y-u, v\rangle\geq 
0  \} )\\
&=\mu(\{ y: 
      \langle  y-u, v_u\rangle\geq 
0  \} )
\geq  {\limsup_{k\to \infty}}\mu(\mathcal{A}_{u,e_k,\theta_k}) {\geq }{\rm TD}(u;\mu)+\alpha.
\end{align*}
Therefore, the result follows. 
\end{proof}
We now turn to the lower bound and argue again  by contradiction. Assume that  $\alpha={\rm TD}(u;\mu)-\eps>0$ and that there exists a sequence $\{Q_k\}_{k\in \N}\subset  \mathcal{P}^{a.c.}(\R^d)$ such that  $\|x_k\|\to \infty $ with $ x_k \in \partial \varphi_{\eps,k}(u)$ and $\partial \varphi_{\eps,k} $  being any maximal {  cyclical} monotone extension of $\nabla \varphi_{\mu\to P+\eps (Q_k-P)}$ for all $k\in \N$. As $P$ is tight, for every $\beta>0$ there exists $R=R(\beta)>0$ such that $P(R\,\mathbb{B})\geq 1-\beta$.  Now we argue as in the proof of \autoref{lemma:LowerCont} but with the cone $\mathcal{C}_{x_k,e_k,\theta_k}$ containing $R\,\mathbb{B}$ and having vertex $x_k$.    As $\|x_k\|\to \infty$, the angle $\theta_k$ of the cone $\mathcal{C}_{x_k,e_k,\theta_k}$ tends to $0$ and the direction $e_k$ tends (after taking subsequences)  to some $v_0\in \mathcal{S}^{d-1}$. Then we obtain 
\begin{align*}
    ((1-\eps)P+ \eps Q_{k})(\mathcal{C}_{x_k,e_k,\theta_k})&\geq  (1-\eps)P (R\, \mathbb{B})\\
    &\geq (1-({\rm TD}(u;\mu)-\alpha))(1-\beta)> 1-{\rm TD}(u;\mu)+\alpha',
\end{align*}
for some $\beta=\beta({\rm TD}(u;\mu),\alpha) $ small enough and $\alpha'=\alpha'(\beta, \alpha)>0$.
Repeating exactly the same argument of \autoref{lemma:LowerCont} we obtain 
\begin{align*}
     1-{\rm TD}(u;\mu)+\alpha'&<((1-\eps)P+ \eps Q_{k})(\mathcal{C}_{x_k,e_k,\theta_k})\\
     &=\mu((\partial\varphi_{\eps,k})^{-1}(\mathcal{C}_{x_k,e_k,\theta_k})) \leq \mu(\mathcal{A}_{u,e_k,\theta_k}).
\end{align*}
Hence, \autoref{lemma:ConvergenceToOneHalph} yields  $ 1-{\rm TD}(u;\mu)+\alpha' \leq \mu(\{ y: 
      \langle  y-u, v_0\rangle\geq 
0  \} ) $. However, as $\mu\ll \mathcal{L}_d$, it holds that
\begin{align*}
 \mu(\{ y: 
      \langle  y-u, v_0\rangle\geq 
0  \} )  &\leq \max_{v\in \mathcal{S}^{d-1}}\mu(\{ y: 
      \langle  y-u, v\rangle\geq 
0  \} )\\
&=\max_{v\in \mathcal{S}^{d-1}}(1-\mu(\{ y: 
      \langle  y-u, v\rangle\leq  
0  \} )\\
&=1-{\rm TD}(u;\mu),
\end{align*}
which leads to a contradiction. Hence, the following result  follows and the proof of \autoref{Theo:mainCont} finishes here. 
\begin{lemma}
    \label{lemma:LowerCont2}
    Let $u\in \R^d$ be such that ${\rm TD}(u;\mu)>0$. Then for every $\eps\in [0,{\rm TD}(u;\mu))$, there exists $R>0$, such that 
    $$ \{ x\in \partial \varphi_{\mu\to \nu}(u):\ \varphi_{\mu\to \nu} \in \Gamma_\eps(P)\}\subset R\,\mathbb{B}.
 $$
\end{lemma}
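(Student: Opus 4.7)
The plan is to prove the lemma by contradiction, mirroring the cone-based strategy of \autoref{lemma:LowerCont} but with the roles of reference and target measures reversed. Suppose the conclusion fails: then there exist a sequence $\nu_k=(1-\eps)P+\eps Q_k\in\mathcal{Q}_\eps(P)$, l.s.c.\ convex potentials $\varphi_{\mu\to\nu_k}\in\Gamma_\eps(P)$, and points $x_k\in\partial\varphi_{\mu\to\nu_k}(u)$ with $\|x_k\|\to\infty$. Set $\alpha:={\rm TD}(u;\mu)-\eps>0$. By tightness of $P$, for each $\beta>0$ choose $R(\beta)>0$ with $P(R(\beta)\mathbb{B})\geq 1-\beta$; for all $k$ large enough that $\|x_k\|>R(\beta)+1$, construct the minimal convex cone $\mathcal{C}_{x_k,e_k,\theta_k}$ of the form \eqref{eq:convexcone} with vertex $x_k$ that contains $R(\beta)\mathbb{B}$. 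Since $R(\beta)$ is fixed while $\|x_k\|\to\infty$, the opening angle satisfies $\theta_k\to 0^+$, and (after passing to a subsequence) $e_k\to v_0$ for some $v_0\in\mathcal{S}^{d-1}$.

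The next step is to compare the $\nu_k$-mass of the cone with the $\mu$-mass of its preimage. On one hand,
\[\nu_k(\mathcal{C}_{x_k,e_k,\theta_k})\geq (1-\eps)P(R(\beta)\mathbb{B})\geq (1-\eps)(1-\beta),\]
and choosing $\beta$ small depending on $\alpha$ makes this exceed $1-{\rm TD}(u;\mu)+\alpha'$ for some $\alpha'>0$. On the other hand, $(\partial\varphi_{\mu\to\nu_k})^{-1}$ is maximal monotone and contains the pair $(x_k,u)$, so \autoref{lemma:conetocone} gives $(\partial\varphi_{\mu\to\nu_k})^{-1}(\mathcal{C}_{x_k,e_k,\theta_k})\subset\mathcal{A}_{u,e_k,\theta_k}$. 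Combining this inclusion with the push-forward identity $\mu\bigl((\partial\varphi_{\mu\to\nu_k})^{-1}(A)\bigr)=\nu_k(A)$, valid because the Borel map $(\nabla\varphi_{\mu\to\nu_k})^{-1}$ pushes $\nu_k$ to $\mu$ and coincides $\nu_k$-a.e.\ with any measurable selection of the set-valued inverse, I obtain
\[\mu(\mathcal{A}_{u,e_k,\theta_k})\geq\nu_k(\mathcal{C}_{x_k,e_k,\theta_k})>1-{\rm TD}(u;\mu)+\alpha'.\]

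Finally, I would pass to the limit $k\to\infty$ via \autoref{lemma:ConvergenceToOneHalph}, obtaining $\mu(\{y:\langle y-u,v_0\rangle\geq 0\})\geq 1-{\rm TD}(u;\mu)+\alpha'$. Since $\mu\ll\mathcal{L}_d$ makes hyperplanes $\mu$-null, the left-hand side equals $1-\mu(\{y:\langle y-u,v_0\rangle<0\})\leq 1-{\rm TD}(u;\mu)$, delivering the contradiction. The main technical obstacle is the push-forward identity for the set-valued inverse: because the extension $\partial\varphi_{\mu\to\nu_k}$ is not unique (cf.\ \autoref{remark:UniqueExtension}), one must verify that every maximal monotone extension has a Borel inverse coinciding $\nu_k$-a.e.\ with the (genuinely unique, by McCann's theorem applied since $\nu_k\in\mathcal{P}^{a.c.}(\R^d)$) optimal transport map from $\nu_k$ back to $\mu$, so that measures of preimages are preserved regardless of the chosen selector.
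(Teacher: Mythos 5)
Your proof is correct and matches the paper's argument step for step: the contradiction via a cone with vertex $x_k\to\infty$ containing a ball carrying at least $(1-\beta)$ of the $P$-mass, the application of \autoref{lemma:conetocone} to the monotone inverse $(\partial\varphi_{\mu\to\nu_k})^{-1}$ with the pair $(x_k,u)$, the limit via \autoref{lemma:ConvergenceToOneHalph} after extracting $e_k\to v_0$ and $\theta_k\to 0$, and the final contradiction from $\mu\ll\mathcal{L}_d$. Your closing remark about verifying the push-forward relation for the set-valued inverse (independently of the chosen maximal monotone extension) is a legitimate technical point that the paper leaves implicit; it does hold, exactly as you indicate, because $\partial\varphi_{\mu\to\nu_k}^{-1}$ coincides $\nu_k$-a.e.\ with the unique optimal map $\nabla\varphi_{\nu_k\to\mu}$ since $\nu_k\in\mathcal{P}^{a.c.}(\R^d)$.
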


\subsection{Empirical optimal transport: proof of \autoref{Theo:mainDiscrete2}}\label{Sect:proofDisc}
For two $n\times n$ real matrices $A=(A_{ij})$ and $B=(B_{ij})$, where $i,j\in [n]$ and $[n]=\{1,\dots, n\}$, let  $\langle A, B \rangle_F=\sum_{i,j\in [n] } A_{ij} B_{ij}$ denote their Frobenius inner product.  
An optimal matching  between the dataset $X^{(n)}$ and the reference points $u^{(n)}$  is any $n\times n$ real matrix $\gamma=(\gamma_{ij})$ solving 
\[
 \inf_{\gamma\in \Pi_n} \langle c, \pi \rangle. 
\]
where $c=(c_{ij})$ is a symmetric $n\times n$  cost matrix  with elements $c_{ij}=(\|u_i-X_j\|^2)$, and $ \Pi_{n}$ denotes the {\it Birkhoff polytope} of doubly stochastic matrices \cite{Brualdi.06} defined as 
$$
  \Pi_{n}=\left\{\pi\in { \R}^{n\times n}:\, \sum_{i=1}^n \pi_{ij}  = 1,   \; \sum_{j=1}^n \pi_{ij}=1, \; \pi_{ij}\geq 0\right\}.
$$ 
 A celebrated theorem due to Birkhoff states that the   {vertices} of   $\Pi_{n}$  is the set of permutation matrices. This implies that an optimal matching can be identified with an {\it optimal permutation} $\hat{\sigma}$ on the set $\{1, \dots, n\}$. Therefore, an empirical optimal transport map 
$\hat{T} =\hat{T}_{X^{(n)}}$  can be described as $\hat{T}_n(u_i)= X_{\hat{\sigma}(i)}$, where $\hat{\sigma}$ is an optimal permutation, which 
in general   might not be unique. However, it is well-known  {(see e.g.~Exercise~2.21 in \cite{Villani.03})} that $\hat{T}$ is monotone, i.e.,
\begin{equation}
    \label{eq:monotoneDiscreteProof}
    \langle u_i-u_j, \hat{T}(u_i)-\hat{T}(u_j)\rangle \geq 0  \quad \text{ for all }i,j\in \{1, \dots, n\}
\end{equation}
for any optimal transport map 
$\hat{T}_n $. The monotonicity property \eqref{eq:monotoneDiscreteProof} is the key ingredient necessary to apply \autoref{lemma:conetocone} in the proof of \autoref{Theo:mainDiscrete2}.  {We show at the same time \autoref{Theo:mainDiscrete2} and \autoref{coro:median_contours_disc}. We denote as $\hat{\mathbb{T}}_{X^{(n)}} $ any maximal {  cyclical} monotone interpolation in the sense of \cite{HallinDelBarrioCuestaAlbertosMatran.21}. That is, $\hat{\mathbb{T}}_{X^{(n)}} $ is the subdifferential of a convex function such that $ \hat{T}_{X^{(n)}}(u_i)\in \hat{\mathbb{T}}_{X^{(n)}} (u_i)$ for all $i\in \{1, \dots, n\}$.  }
The claimed finite sample breakdown point results follow from the  upper and lower bounds that we establish in the following two lemmas. Their proofs  follow along the lines of those of given for \autoref{lemma:LowerCont} and \autoref{lemma:LowerCont2}. 
\begin{lemma}\label{lemma:UpperDisc}  { Fix $u\in \R^d$ and $\ell\in \{1, \dots, n\}$  such that ${\rm TD}(u;u_1, \dots, u_n)<\ell/n$.}
 Let $\ell \in \{1, \dots, n\}$ and  $X^{(n,\ell)}_{k}=\{ X_1 \dots, X_{n-\ell},v_1^{(k)}, \dots, v_\ell^{(k)}\}$  where $\{v_1^{(k)}, \dots, v_\ell^{(k)} \}\subset  k v_u+\mathbb{B}$,  $k\in \N$ for some
 $$v_u\in \argmin_{v\in \mathcal{S}^{d-1}} \left\{ \frac{1}{n} \sum_{i=1}^{n} \mathbf{1}_{\left( \langle v, u_i - u \rangle \geq 0 \right)} \right\}.$$   
Then it holds that  
 $$\lim_{k\to \infty} \inf_{x\in \hat{\mathbb{T}}_{X^{(n,\ell)}_{k}}(u)}\|x \|=\infty.$$ 
\end{lemma}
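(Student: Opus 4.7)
The plan is to adapt the argument of \autoref{lemma:LowerCont} to the discrete setting, substituting the push-forward constraint by a pigeonhole count and relying on the monotonicity relation \eqref{eq:monotoneDiscreteProof} in place of maximal monotonicity of subdifferentials. I argue by contradiction: assume that along some subsequence $x_k := \hat{T}_{X^{(n,\ell)}_{k}}(u_i)$ remains inside $R\,\mathbb{B}$ for a fixed $R<\infty$.

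For $k > R+1$, the ball $kv_u + \mathbb{B}$ is disjoint from $R\,\mathbb{B}$, so I can consider the smallest closed convex cone $\mathcal{C}_{x_k, e_k, \theta_k}$ with vertex $x_k$ containing it, parametrized by axis $e_k \in \mathcal{S}^{d-1}$ and half-aperture $\theta_k \in (0, \pi/2)$. Elementary geometry, combined with $\|x_k\| \leq R$ and the escape of $kv_u + \mathbb{B}$ to infinity along $v_u$, yields $e_k \to v_u$ and $\theta_k \to 0^+$. By construction, the $\ell$ contaminating values $v_1^{(k)}, \dots, v_\ell^{(k)}$ all lie inside $\mathcal{C}_{x_k, e_k, \theta_k}$.

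Since $\hat{T}_{X^{(n,\ell)}_k}$ is bijective and satisfies \eqref{eq:monotoneDiscreteProof}, its inverse $\hat{T}_{X^{(n,\ell)}_k}^{-1}$ is a monotone set-valued map in the sense of \eqref{monoton}, with $u_i \in \hat{T}_{X^{(n,\ell)}_k}^{-1}(x_k)$. Applying \autoref{lemma:conetocone} to this inverse with base point $x_k$, the preimages $\hat{T}_{X^{(n,\ell)}_k}^{-1}(v_j^{(k)})$, $j=1,\dots,\ell$, are $\ell$ distinct elements of $u^{(n)}$ that all lie in $\mathcal{A}_{u_i, e_k, \theta_k}$. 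Finiteness of $u^{(n)}$, together with $\tfrac{|\sin\theta_k|}{\sqrt{1-\sin^2\theta_k}} \to 0$ and $e_k \to v_u$, implies that any $u_j$ with $\langle u_j - u_i, v_u\rangle < 0$ eventually leaves $\mathcal{A}_{u_i, e_k, \theta_k}$, because its scalar product against $e_k$ is bounded away from $0$ while the tolerance on the right-hand side of the defining inequality tends to $0$. Consequently, for $k$ large enough,
\[ \bigl|\mathcal{A}_{u_i, e_k, \theta_k} \cap u^{(n)}\bigr| \;\leq\; \bigl|\{j : \langle u_j - u_i, v_u\rangle \geq 0\}\bigr| \;=\; n\,\mathrm{TD}(u_i; u^{(n)}) \;<\; \ell, \]
contradicting the existence of $\ell$ distinct preimages in $\mathcal{A}_{u_i, e_k, \theta_k} \cap u^{(n)}$. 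Hence $\|x_k\|\to\infty$.

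The main technical point is the final cardinality estimate, where finiteness of $u^{(n)}$ lets me upgrade the ``soft'' limit of \autoref{lemma:ConvergenceToOneHalph} into the ``hard'' inclusion $\mathcal{A}_{u_i, e_k, \theta_k} \cap u^{(n)} \subset \{u_j : \langle u_j - u_i, v_u\rangle \geq 0\}$ valid for all $k$ sufficiently large. Everything else is a direct transcription of the continuous argument of \autoref{lemma:LowerCont}, with \eqref{eq:monotoneDiscreteProof} playing the role of the maximal monotonicity of $\partial\varphi$ and the bijectivity of $\hat{T}_{X^{(n,\ell)}_k}$ playing the role of the push-forward condition.
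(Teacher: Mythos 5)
Your proof is correct and follows essentially the same route as the paper: contradiction via boundedness of $x_k$, construction of the cone $\mathcal{C}_{x_k,e_k,\theta_k}$ containing the contaminating points with $\theta_k\to 0^+$ and $e_k\to v_u$, and an application of \autoref{lemma:conetocone} to the inverse map to force the $\ell$ preimages into $\mathcal{A}_{u_i,e_k,\theta_k}$. The only (cosmetic) difference is the final contradiction: the paper invokes \autoref{lemma:ConvergenceToOneHalph} applied to $\mu_n$, whereas you exploit the finiteness of $u^{(n)}$ to establish the "hard" inclusion $\mathcal{A}_{u_i,e_k,\theta_k}\cap u^{(n)}\subset\{u_j:\langle u_j-u_i,v_u\rangle\geq 0\}$ for $k$ large, which is a slightly more explicit version of the same estimate.
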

\begin{proof}
  We argue by contradiction.  {Fix $u\in \R^d$  such that ${\rm TD}(u_i;u^{(n)})<\ell/n$.}    After taking a subsequence, we can assume that   {$\bigcup_{k\in \N}\hat{\mathbb{T}}_{X^{(n,\ell)}_{k}}(u) $} is contained in $R\,\mathbb{B}$ for some $R>0$. Fix  {$x_k\in \hat{\mathbb{T}}_{X^{(n,\ell)}_{k}}(u)$.}  Let $\mathcal{C}_{x_k, e_k,\theta_k}$ be the cone of rays arising from $u $ and  {intersecting} the set $\{v_1^{(k)}, \dots, v_\ell^{(k)}\}$. It holds that 
    \begin{equation}
    \label{eq:ToContradictlowerbound1Disc}
    \nu_{\ell, k}(\mathcal{C}_{x_k,e_k,\theta_k})\geq \frac{\ell}{k} \geq {\rm TD}( {u};u^{(n)})+\alpha
\end{equation}
  for some $\alpha>0$, where $\nu_{\ell, k}=\frac{1}{n}\sum_{x\in X^{(n,\ell)}_{k}}\delta_{x}$ denotes empirical measure of the dataset $X^{(n,\ell)}_{k}$.  Since  {$\hat{\mathbb{T}}_{X^{(n,\ell)}_{k}}$} is monotone, \autoref{lemma:conetocone} implies that
$$(\hat{ {\mathbb{T}}}_{X^{(n,\ell)}_{k}})^{-1}(X^{(n,\ell)}_{k}\cap \mathcal{C}_{x_k,e_k,\theta_k})\subset  \mathcal{A}_{u_i,e_k,\theta_k},  $$
which combined with \eqref{eq:ToContradictlowerbound1Disc} leads to
\begin{align}
    \begin{split}
        \label{eq:ToContradictlowerbound2Disc}
 {\rm TD}( {u};u^{(n)})+\alpha  & {\le} \nu_{\ell,k}(\mathcal{C}_{x_k,e_k,\theta_k}) \\
 &=\mu_n((\hat{ {\mathbb{T}}}_{X^{(n,\ell)}_{k}})^{-1}(X^{(n,\ell)}_{k}\cap \mathcal{C}_{x_k,e_k,\theta_k} ) {)}
 \leq  \mu_n(\mathcal{A}_{ {u},e_k,\theta_k}),
    \end{split}
\end{align}
where $\mu_n=\frac{1}{n}\sum_{i=1}^n\delta_{u_i}$. 
Hence, \autoref{lemma:ConvergenceToOneHalph} leads to a contradiction when taking limits in \eqref{eq:ToContradictlowerbound2Disc} as $k\to \infty$. (Note that, as in \autoref{lemma:LowerCont}, $\theta_k\to 0  $ and $e_k\to v$.) 
\end{proof}
\begin{lemma}
    \label{lemma:LowerDisc}
    Fix $i\in \{ 1, \dots, n\}$ and $u\in \R$.  Then, for every $\ell\in \N$ such that   ${\rm TD}^-( {u};u^{(n)})>\ell/n$, there exists a radius $R>0$ such that $\hat{ {\mathbb{T}}}_{X^{(n,\ell)}_{k}}(u)\subset  R\,\mathbb{B}$ for all $X^{(n,\ell)}_{k}\in \mathcal{Q}_{\ell,n}$.
\end{lemma}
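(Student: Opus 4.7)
The plan is to run, in the discrete world, the same contradiction argument that proves \autoref{lemma:LowerCont2}, with the analytic role of absolute continuity replaced by the combinatorial ``$+1/n$'' built into $\mathrm{TD}^{-}$. First I would suppose, for contradiction, that along some sequence $X^{(n,\ell)}_k\in \mathcal{Q}_{\ell,n}$ the images $y_k:=\hat{T}_{X^{(n,\ell)}_k}(u_i)$ escape to the horizon, $\|y_k\|\to\infty$. Because every contaminated sample retains $n-\ell$ points of the fixed dataset $X^{(n)}$, all retained points lie in the ball $R_0\mathbb{B}$ with $R_0=\max_{j\in[n]}\|X_j\|$. In particular, for all $k$ large, $y_k$ must be one of the $\ell$ contaminating points.

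Next, for such $k$ I would pick the smallest cone $\mathcal{C}_{y_k,e_k,\theta_k}$ with vertex $y_k$ containing $R_0\mathbb{B}$; then $\theta_k\to 0^+$ and, up to a subsequence, $e_k\to v_0\in \mathcal{S}^{d-1}$. This cone contains the $n-\ell$ retained points together with its own vertex $y_k$, which is distinct from all of them, so at least $n-\ell+1$ distinct elements of $X^{(n,\ell)}_k$ sit inside $\mathcal{C}_{y_k,e_k,\theta_k}$. Because $\hat{T}_{X^{(n,\ell)}_k}$ is a bijection obeying the discrete monotonicity relation \eqref{eq:monotoneDiscreteProof}, applying \autoref{lemma:conetocone} to its inverse shows that the $n-\ell+1$ preimages of these points, which include $u_i$ itself (the preimage of $y_k$), all lie in $\mathcal{A}_{u_i,e_k,\theta_k}$. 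This yields $\mu_n(\mathcal{A}_{u_i,e_k,\theta_k})\geq (n-\ell+1)/n$, where $\mu_n=\tfrac{1}{n}\sum_{j=1}^n\delta_{u_j}$.

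Passing to the limit $k\to\infty$ and invoking \autoref{lemma:ConvergenceToOneHalph} with $\mu=\mu_n$ then gives
\[
\mu_n\bigl(\{y:\langle y-u_i,v_0\rangle\geq 0\}\bigr)\;\geq\;\frac{n-\ell+1}{n}.
\]
Rewriting the left-hand side in counting form, $\max_{v\in \mathcal{S}^{d-1}}\tfrac{1}{n}\sum_{j=1}^{n}\mathbf{1}(\langle v,u_j-u_i\rangle\geq 0)\geq (n-\ell+1)/n$, and using the very definition of the lower Tukey depth produces $\mathrm{TD}^{-}(u_i;u^{(n)})\leq \tfrac{n+1}{n}-\tfrac{n-\ell+1}{n}=\ell/n$, contradicting the hypothesis $\mathrm{TD}^{-}(u_i;u^{(n)})>\ell/n$. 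Hence $\{\hat{T}_{X^{(n,\ell)}_k}(u_i)\}_k$ must be confined to some common ball $R\,\mathbb{B}$.

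The delicate step is the cardinality count: reaching the threshold $\mathrm{TD}^{-}$ rather than $\mathrm{TD}$ requires squeezing the extra ``$+1$'' out of the cone's vertex $y_k$. In the continuous proof of \autoref{lemma:LowerCont2} this extra point is invisible because the boundary of the limiting half-space has zero $\mu$-mass; in the discrete case it is exactly this boundary point that must be claimed explicitly, which is what turns the weaker bound involving $\mathrm{TD}$ (dual to \autoref{lemma:UpperDisc}) into the sharper one involving $\mathrm{TD}^{-}$.
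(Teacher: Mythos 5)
Your proof is correct and follows the same contradiction-via-cone strategy as the paper, but you have filled in the key step that the paper's own proof merely sketches by saying ``repeat the same steps of \autoref{lemma:LowerCont2}.'' A literal transcription of the continuous argument would only bound the mass of the cone by $(n-\ell)/n$ (the retained points), which is not strictly larger than $\tfrac{n+1}{n}-\mathrm{TD}^-(u_i;u^{(n)})$ when $\mathrm{TD}^-(u_i;u^{(n)})=(\ell+1)/n$; your explicit count of the vertex $y_k$ (a contaminating point, hence distinct from the retained ones, hence a bona fide extra target inside the cone whose preimage $u_i$ lies in $\mathcal{A}_{u_i,e_k,\theta_k}$) supplies the ``$+1/n$'' that makes the inequality strict and reaches the $\mathrm{TD}^-$ threshold. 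This is exactly the discrete surrogate for the atomlessness of $P$ that does the job silently in the continuous Lemma, and your final lines explaining that distinction identify the right conceptual point.
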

\begin{proof} 
 Let $R'>0$ be such that $X_i\in R'\, \mathbb{B}$ for all $i=1, \dots, n$. 
 {Assume  that there exists a sequence $\{X^{(n,\ell)}_{k}\}_{k\in \N}\subset  \mathcal{Q}_{\ell,n}$ such that $\|x_k\|\to \infty$  as $k\to \infty$ and for some $x_k\in \hat{ {\mathbb{T}}}_{X^{(n,\ell)}_{k}}(u) $.}  We define the counting measures $\nu_{\ell, k}=\frac{1}{n}\sum_{ {v}\in X^{(n,\ell)}_{k}}\delta_{ {v}}$ and $\mu_{n}=\frac{1}{n}\sum_{i=1}^n \delta_{u_i}$. We repeat the same steps of the proof of \autoref{lemma:LowerCont2} but exchanging $P+\eps (Q-P) $ by $\nu_{\ell, k}$ and $\mu$ by $\mu_n$.  {That is, we define the smallest cone $\mathcal{C}_{x_k,e_k,\theta_k}$ with vertex $x_k$ and containing $R'\, \mathbb{B}$. Since,  $\|x_k\|\to \infty$, it follows that $\theta_k\to 0$ and  $e_k\to v_0\in \mathcal{S}^{d-1}$, after taking subsequences. Since the extension $\hat{{\mathbb{T}}}_{X^{(n,\ell)}_{k}}$ is monotone, \autoref{lemma:conetocone} implies that 
$$ \hat{{\mathbb{T}}}_{X^{(n,\ell)}_{k}}^{-1}(\mathcal{C}_{x_k,e_k,\theta_k}) \subset \mathcal{A}_{u,e_k,\theta_k}. $$
We integrate with respect to $\mu_n$ in both sides to get
$$ \mu_n(\mathcal{A}_{u,e_k,\theta_k}) \geq \mu_n(\hat{{\mathbb{T}}}_{X^{(n,\ell)}_{k}}^{-1}(\mathcal{C}_{x_k,e_k,\theta_k}) ) =  \nu_{\ell,n}(\mathcal{C}_{x_k,e_k,\theta_k}) \geq \alpha(\ell,n) , $$
where 
$$ \alpha(\ell,n)=\begin{cases}
    \frac{n+1-\ell}{n} & \text{if $u=u_i$ for some $i\in \{1, \dots, n\}$,}\\
    \frac{n-\ell}{n} & \text{otherwise}.
\end{cases} $$
We take limits as $k\to \infty$ and apply \autoref{lemma:ConvergenceToOneHalph} to get
$$  \alpha(\ell,n) \leq \mu_n (\{ y: 
      \langle  y-{u}, v_0\rangle\geq 
0  \} ) \leq \max_{v\in \mathcal{S}^{d-1}}\mu_n (\{ y: 
      \langle  y-{u}, v\rangle\geq 
0  \} )  .$$
Hence, if $u=u_i$ for some $i\in \{1, \dots, n\}$
$$  \frac{\ell}{n} \geq \frac{n-1}{n}- \max_{v\in \mathcal{S}^{d-1}}\mu_n (\{ y: 
      \langle  y-{u}, v\rangle\geq 
0  \} ) =\frac{1}{n}+\min_{v\in \mathcal{S}^{d-1}}\mu_n (\{ y: 
      \langle  y-{u}, v\rangle> 
0  \} ) ,$$
and, otherwise, 
$$  \frac{\ell}{n} \geq 1- \max_{v\in \mathcal{S}^{d-1}}\mu_n (\{ y: 
      \langle  y-{u}, v\rangle\geq 
0  \} ) =\min_{v\in \mathcal{S}^{d-1}}\mu_n (\{ y: 
      \langle  y-{u}, v\rangle> 
0  \} ) .$$
This concludes the proof.} 
\end{proof}
\subsection{Additional auxiliary results}
\subsubsection{Proof of \autoref{lemma:Consistency-ofTukey}}\label{Section:Proof-of-lemma-Tukey}

 { \begin{proof}[Proof of \autoref{lemma:Consistency-ofTukey}]
 Let $\{x_n\}_{n\in \N}$ be a sequence such that 
 $$  \lim_n |{\rm TD}(x_n; \mu_n) - {\rm TD}(x_n; \mu)|= \lim_n\sup_{x\in \R^d}|{\rm TD}(x; \mu_n) - {\rm TD}(x; \mu)|.$$
 Assume first that $\|x_n\|\to \infty$. Since $\mu_n$ converges to $\mu$ in distribution, for every $\delta>0$, there exists  a compact convex set $K$ with $ \mu_n(K)\geq 1-\delta $ and $ \mu(K)\geq 1-\delta$. Since $\|x_n\|\to \infty$, there exists $n_0\in \N$ such that  $x_n\notin K$ for $n\geq n_0$. Therefore, the separation theorem \cite[Theorem 11.2]{Rockafellar.70} implies there exists a closed halfspace $H_n$ containing $x_n$ with $\mu_n(H_n)\leq  \delta$ and  $\mu(H_n)\leq \delta$,  for all $n\geq n_0$. This implies that 
 $$ \limsup_n |{\rm TD}(x_n; \mu_n) - {\rm TD}(x_n; \mu) | \leq 2\delta, \quad \text{for all }\delta>0,$$
which concludes the proof in the case that $\|x_n\|\to \infty$. We can assume now that $x_n \to x_*$. 
     We define, for $\eps>0$, the continuous and bounded functions $\Theta_\eps, \Phi_\eps:\R^d\times \R^d \times\mathcal{S}^{d-1}\to \R $ as  $$ \Theta_\eps (x,y,s)= \begin{cases}
         1 & \text{if } \langle  s-x, y\rangle\leq -\eps, \\
       -\frac{\langle  s-x, y\rangle}{\eps}  & \text{if } \langle  s-x, y\rangle \in [-\eps,0],\\
         0 &  \text{otherwise},
     \end{cases}  $$
     and 
     $$ \Phi_\eps (x,y,s)= \begin{cases}
         1 & \text{if } \langle  s-x, y\rangle\leq 0, \\
       1-\frac{\langle  s-x, y\rangle}{\eps}  & \text{if } \langle  s-x, y\rangle \in [0,\eps],\\
         0 &  \text{otherwise}.
     \end{cases}  $$
We note that the functions $\Theta_\eps $ and $ \Phi_\eps$ are Lipschitz with constant $L_\eps$ depending exclusively on $\eps$.  
Then if $\mu_n$ converges in distribution to $\mu$, it follows that 
\begin{equation}
    \label{forTheta-phi}
     \sup_{x,y} \left| \int \Theta_\eps(x,y,s)d(\mu_n-\mu)(s) \right| \to 0 \quad {\rm and} \quad \sup_{x,y} \left| \int \Phi_\eps(x,y,s)d(\mu_n-\mu)(s) \right|\to 0. 
\end{equation}
Then 
\begin{align*}
    {\rm TD}(x_n; \mu_n) &\geq \inf_{v} \int \Theta_\eps(x_n,y,s) d \mu_n(s)\\
    &\geq \inf_{y} \int \Theta_\eps(x_n,y,s) d \mu(s)-  \sup_{x,y} \left|\int \Theta_\eps(x,y,s) d (\mu_n-\mu)(s)\right|
\end{align*}
so that, by taking limits, using \eqref{forTheta-phi} and the Lipschitz continuity of $\Theta_\eps$,
$$ \liminf_{n} {\rm TD}(x_n; \mu) \geq    \inf_{y}  \int \Theta_\eps(x_*,y,s) d \mu(s) . $$
Since, for every $\eps> 0$ and $n\in \N$,  
$$ |\Theta_\eps(x_*,y,s)- {\bf 1}_{\{ \langle  s-x_*, y\rangle\leq 0\}} | \leq {\bf 1}_{\{ \langle  s-x_*, y\rangle \in [-\eps,0]\}}, $$
we obtain that for every $\eps>0$
\begin{equation}
    \label{eq:Bound-Tukey-lower-proof-consistency-almost}
    \liminf_{n}  {\rm TD}(x_n; \mu_n) \geq {\rm TD}(x_*; \mu)  -  \sup_{y}\mu(\{s: \langle  s-x_*, y\rangle \in [-\eps,0]\} ).
\end{equation}
We claim that 
\begin{equation}
    \label{eps-to-0}
 L:=  \lim_{
   \eps\to 0
   } \limsup_{n\to \infty} \sup_{y}\mu(\{s: \langle  s-x_*, y\rangle \in [-\eps,0]\} )=0.
\end{equation}
Let $\{\eps_n\}$ and $\{y_n\}$ be such that $\eps_n\to 0$ and 
$$ \liminf_n \mu(\{s: \langle  s-x_*, y_n\rangle \in [-\eps_n,0]\} ) =L. $$
By compactness of the unit sphere, we can assume (after taking subsequences) that $y_n\to y_*$ as $n\to \infty$. Fix $\delta>0$ and let $K=K_\delta\subset \R^d$ be a compact set such that $\mu(K)\geq 1-\delta$. Then we have 
$$ \mu(\{s: \langle  s-x_n, y_n\rangle \in [-\eps,0]\} )  \leq \mu(\{s\in K: \langle  s-x_*, y_n\rangle \in [-\eps_n,0]\} ) +\delta  ,$$
which yields 
$$ \mu(\{s: \langle  s-x_*, y_n\rangle \in [-\eps,0]\} )  \leq \mu(\{s\in K: \langle  s-x_*, y\rangle \in [-(\eps_n+C\|y_n-y_*\| ),0]\} ) +\delta ,$$
where the constant $C$ depends on $x_*$ and $K$. Since $ \mu$ does not give mass to hyperplanes, and 
$\eps_n+C\|y_n-y_*\| \to 0$, 
the dominated convergence theorem yields 
$ \mu(\{s: \langle  s-x_*, y_n\rangle \in [-\eps,0]\} )  \leq \delta .$  Since $\delta>0$ was arbitrarily chosen, \eqref{eps-to-0} follows. Hence, \eqref{eq:Bound-Tukey-lower-proof-consistency-almost} yields 
$$ \liminf_{n}  {\rm TD}(x_n; \mu_n) \geq {\rm TD}(x_*; \mu),   $$
for all $\mu_n \to \mu$ in distribution and $x_n\to x_*$. 
The reverse bound $$  \limsup_{n}  {\rm TD}(x_n; \mu_n) \leq {\rm TD}(x_*; \mu) $$ 
follows the same steps but using the function $\Phi_\eps$ instead of $\Theta_\eps$. This shows that 
$$ \limsup_{n}  {\rm TD}(x_n; \mu_n) =  {\rm TD}(x_*; \mu)=\liminf_{n}  {\rm TD}(x_n; \mu_n) , $$
which in turn implies that
$$ \lim_n \sup_{x\in \R^d}|{\rm TD}(x; \mu_n) - {\rm TD}(x; \mu)|= \lim_n |{\rm TD}(x_n; \mu_n) - {\rm TD}(x_n; \mu)|=0 .$$
This shows the first claim of the lemma. To prove second one, let $x_{\frac{1}{2}}$ be such that ${\rm TD}(x_{\frac{1}{2}}; \mu)=1/2$. Then, 
$$\sup_x {\rm TD}(x; \mu_n) \geq  {\rm TD}(x_{\frac{1}{2}}; \mu_n) \to  1/2, \quad \text{as $n\to \infty$,}$$
and 
$$ \lim_{n} \sup_x {\rm TD}(x; \mu_n) \leq  \sup_x {\rm TD}(x; \mu) + \lim_n \sup_{x\in \R^d}|{\rm TD}(x; \mu_n) - {\rm TD}(x; \mu)| = \frac{1}{2} , $$
which concludes the proof.

 \end{proof}
}

 \subsubsection{Univariate transport-based median}\label{Section:univariate median}

{
The univariate result stated below has been well-known by the community working on transport-based quantiles and is stated below for the sake of completeness. 

\begin{lemma}\label{lem:univariate-median} Fix a reference measure $\mu\in \mathcal{P}^{a.c.}(\R)$. Then, for any  $P\in \mathcal{P}^{a.c.}(\R)$,
$$ m_\pm(P) = \{ x_*\in \R:  P(\{z\in \R: z\leq x_*\}) = P(\{z\in \R: z> x_*\})=1/2 \}.$$
\end{lemma}
\begin{proof}
For $\nu\in \mathcal{P}^{a.c.}(\R)$, we denote as 
    $$ m_\nu = \{ x_*\in \R:  \nu(\{z\in \R: z\leq x_*\}) = \nu(\{z\in \R: z> x_*\})=1/2 \}. $$
Let  $\mathbb{T}_{\mu\to P }$ be any maximal {  cyclical} monotone extension of the unique monotone map ${T}_{\mu\to P }$ pushing $\mu$ forward to $P$.
Our goal is to show that 
\begin{equation}
    \label{eq:Proof-of-median}
m_\pm (P)=\mathbb{T}_{\mu\to P }(m_\mu)= m_P
\end{equation}
Note that by definition $m_\pm (P)=\mathbb{T}_{\mu\to P }(m_\mu)$. Let $u^*\in m_\mu$ be a median of the reference measure $\mu$. 
 Then using the monotonicity of ${T}_{\mu\to P }$, for every $x_*\in \mathbb{T}_{\mu\to P }(u_*) $,   if $\pm u< \pm u_*$ then $ \pm x \leq \pm x_*$ for all $x\in \mathbb{T}_{\mu\to P }(u)$. As a consequence,  for every $x_*\in \mathbb{T}_{\mu\to P }(u_*) $,
    $$ P(\{z\in \R: z< x_*\}) = \mu(\{ u\in \R : {T}_{\mu\to P }(u) <x_*\}) \leq    \mu(\{ u \in \R:u \leq u_*\})  =\frac{1}{2}$$
    and 
      $$ P(\{z\in \R: z> x_*\}) = \mu(\{ u\in \R : {T}_{\mu\to P }(u) >x_*\}) \leq    \mu(\{ u\in \R :u \geq u_*\})  =\frac{1}{2},$$
so that $x_*\in m_P$. Now let  $\mathbb{T}_{\mu\to P }^{-1}$ be the inverse of $\mathbb{T}_{\mu\to P }$ in the set-valued sense, i.e., $\mathbb{T}_{\mu\to P }^{-1}(x) =\{u: x\in  \mathbb{T}_{\mu\to P }(u) \}$.  Note that the transport map  $T_{P\to \mu}$ from  $P$ to $\mu$ is a Borel  measurable selection of $\mathbb{T}_{\mu\to P }^{-1}$ (see \cite[Remark~16]{McCann.95} and \cite[Corollary~23.5.1.]{Rockafellar.70}). Since $\mathbb{T}_{\mu\to P }^{-1}$ is also monotone, we get that if $x_*$ is a median of $P$ then any $u_*\in \mathbb{T}_{\mu\to P }^{-1}(x_*)$ is a median of $ \mu$. Therefore, \eqref{eq:Proof-of-median} holds and the proof is concluded. 
\end{proof}
}


\bibliography{biblio2}

\end{document}